 \newcommand{\be}{\begin{equation}}
       \newcommand{\ee}{\end{equation}}
       \newcommand{\ba}{\begin{eqnarray}}
        \newcommand{\ea}{\end{eqnarray}}
 \newcommand{\ban}{\begin{eqnarray*}}
 \newcommand{\ean}{\end{eqnarray*}}
\def\XXint#1#2#3{{\setbox0=\hbox{$#1{#2#3}{\int}$}
     \vcenter{\hbox{$#2#3$}}\kern-.5\wd0}}
  \newcommand{\Pf}{\noindent {\bf Proof:} }
  \newcommand{\Rk}{\noindent {\bf Remark} }
 \newtheorem{theo}{Theorem}[section]
\begin{document}
 \newtheorem{defn}[theo]{Definition}
 \newtheorem{ques}[theo]{Question}
 \newtheorem{lem}[theo]{Lemma}
 \newtheorem{prop}[theo]{Proposition}
 \newtheorem{coro}[theo]{Corollary}
 \newtheorem{ex}[theo]{Example}
 \newtheorem{note}[theo]{Note}
 \newtheorem{conj}[theo]{Conjecture}
 \title[Constant scalar curvature]{Rigidity of Five-Dimensional shrinking gradient Ricci solitons}
 \author{Fengjiang Li}
 \address[Fengjiang Li]
  {Mathematical Science Research Center, Chongqing University of Technology, Chongqing 400054, China}
  \email{fengjiangli@cqut.edu.cn}

 \author{Jianyu Ou}
 \address[Jianyu Ou]
 {Department of Mathematics, Xiamen University, Xiamen 361005, China}
 \email{oujianyu@xmu.edu.cn}
\author{Yuanyuan Qu}
	\address[Yuanyuan Qu]{School of Mathematical Sciences, Shanghai Key Laboratory of PMMP, East China Normal University, Shanghai 200241,
		China}
	\email{52285500012@stu.ecnu.edu.cn}

\author{Guoqiang Wu}
\address[Guoqiang Wu]
{School of Science, Zhejiang Sci-Tech University, Hangzhou 310018, China}
\email{gqwu@zstu.edu.cn}

 \subjclass[2010]{Primary 53C44; Secondary 53C21.}

\subjclass[2010]{Primary 53C21; Secondary 53C44.}
 \keywords{Ricci soliton, Constant scalar curvature, Ricci eigenvalues, Gauss-Bonnet-Chern formula.}
 \date{}
 \maketitle

\begin{abstract} Suppose $(M, g, f)$ is a 5-dimensional complete shrinking gradient Ricci soliton with $R=1$.  If it has bounded curvature, we prove that it is a finite quotient of $\mathbb{R}^3\times \mathbb{S}^2$.
\end{abstract}

\section{Introduction}

Let  $(M, g)$ be an $n$-dimensional complete gradient Ricci soliton with the potential function $f$ satisfying
	\begin{align}\label{soliton}
	\text{Ric}+\nabla^2f=\lambda g
	\end{align}
for some constant $\lambda$, where $\text{Ric}$ is the Ricci tensor of $g$ and $\nabla^2f$ denotes the Hessian of the potential function $f$.
The Ricci soliton is said to be shrinking, steady, or expanding accordingly as $\lambda$ is positive, zero, or negative, respectively. By rescalling the metric $g$ by a positive constant, we can assume $\lambda\in \{\frac{1}{2}, 0, -\frac{1}{2}\}$.
	\smallskip	
	
	A gradient Ricci soliton is a self-similar solution to the Ricci flow which flows by diffeomorphisms and homotheties. The study of solitons has become increasingly important in both the study of the Ricci flow introduced by Hamilton \cite{Hamilton} and metric measure theory. Solitons play a direct role as singularity dilations in the Ricci flow proof of uniformization.  Due to the work of Perelman \cite{Perelman2}, Ni-Wallach \cite{Ni-Wallach}, Cao-Chen-Zhu \cite{Cao-Chen-Zhu}, the classification of three-dimensional shrinking gradient Ricci soliton is complete. For more work on the classification of gradient Ricci soliton under various curvature condition, see \cite{Brendle1, Brendle2, Cao-Chen, Cao-Chen2, Cao-Chen-Zhu, Cao-Wang-Zhang, Chen-Wang,Kotschwar, Eminenti-LaNave-Mantegazza, Munteanu-Wang7, Munteanu-Wang6, Munteanu-Wang1, Munteanu-Wang2, Munteanu-Wang3,   Munteanu-Wang4, Munteanu-Wang5,  Naber, Petersen-Wylie2, Pigola-Rimoldi-Setti, Wu-Zhang, Wu-Wu-Wylie,  Zhang}.

	\smallskip	
	A gradient Ricci soliton $(M, g)$ is said to be rigid if it is isometric to a quotient ${N} \times \mathbb{R}^k$, the product soliton of an Einstein manifold ${N}$ of positive scalar curvature with the Gaussian soliton $\mathbb{R}^k$ \cite{Petersen-Wylie}.
	Conversely, for the complete shrinking case, Prof. Huai-Dong Cao raised the following
	
	\smallskip	
	\noindent {\bf Conjecture}:
	Let $(M^n, g, f)$, $n\geq 4$, be a complete $n$-dimensional gradient shrinking Ricci soliton. If $(M, g)$ has constant scalar curvature, then it must be rigid,
	i.e., a finite quotient of ${N}^k\times \mathbb{R}^{n-k}$ for some Einstein manifold ${N}$ of positive scalar curvature.
	
	\smallskip	

    Petersen and Wylie \cite{Petersen-Wylie2} proved that a complete gradient Ricci soliton is rigid if and only if it has
	constant scalar curvature and is radially flat, that is, the sectional curvature $K(\cdot, \nabla f)=0$. Fern\'{a}ndez-L\'{o}pez and Garc\'\i a-R\'\i o \cite{FR16} obtained that the soliton is rigid if and only if the Ricci curvature has constant rank. They also derived the following results for complete $n$-dimensional gradient Ricci solitons \eqref{soliton} with constant scalar curvature $R$: (i) The possible value of $R$ is $\{0, \frac{1}{2}, \cdots, \frac{n-1}{2}, \frac{n}{2}\}$. (ii) If $R$ takes the value $\frac{n-1}{2}$,  then the soliton must be rigid. (iii) In the shrinking case, there is  no any complete gradient shrinking Ricci soliton  with $R=\frac{1}{2}$.
	(iv) Any $n$-dimensional gradient shrinking Ricci soliton with constant scalar curvature $R=\frac{n-2}{2}$ has non-negative Ricci curvature.
	
	\smallskip

In dimension $n=4$, Cheng and Zhou \cite{Cheng-Zhou} confirmed Cao's conjecture.
Very recently, the authors gave a simple proof of Cheng-Zhou's result in \cite{Ou-Qu-Wu}, see also \cite{Wu-Wu}.
In dimension $n=5$, the possible value of the scalar curvature is $\{0, 1, \frac{3}{2}, 2, \frac{5}{2}\}$. From above discussion, it is easy to see that $(M^5, g, f)$ is isometric to $\mathbb{R}^5$ if $R=0$, a finite quotient of $\mathbb{R}\times {N}^4$ (${N}^4$ is a $4$-dimensional Einstein manifold) if $R=2$, a finite quotient of ${N}^5$ (${N}^5$ is a $5$-dimensional Einstein manifold) if $R=\frac{5}{2}$. The most difficult case is $R=1$ or $R=\frac{3}{2}$. In \cite{Li-Ou-Qu-Wu}, we proved that $5$-dimensional shrinking gradient Ricci soliton with constant scalar curvature $\frac{3}{2}$ is isometric to a finite quotient  of $\mathbb{R}^2\times \mathbb{S}^3$. In this paper we consider the last case $R=1$, and the main Theorem is as follows.

\begin{theo}\label{maintheorem}
 Let $(M, g, f)$ be a 5-dimensional complete noncompact shrinking gradient Ricci soliton with constant scalar curvature $1$. If $M$ has bounded curvature, then it must be isometric to a finite quotient of $\mathbb {R}^3\times\mathbb{S}^2$.
\end{theo}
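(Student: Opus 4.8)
The plan is to prove rigidity by showing the soliton must be radially flat (equivalently, that the Ricci tensor has constant rank), since by the Petersen–Wylie and Fernández-López–García-Río criteria quoted in the excerpt this forces $(M,g,f)$ to split as a product. With $R=1$ and $n=5$, the Ricci eigenvalues must sum to $1$, and the first step is to analyze their pointwise structure. Using the standard soliton identities $\Delta_f R = R - 2|\Ric|^2$ (which with $R\equiv 1$ gives $|\Ric|^2 = \tfrac12$) and $\nabla R = 2\,\Ric(\nabla f,\cdot)$ (which with $R\equiv 1$ gives $\Ric(\nabla f)=0$), I would first establish that $\nabla f$ is everywhere a null direction of $\Ric$, so the soliton equation $\Ric + \Hess f = \tfrac12 g$ gives $\Hess f(\nabla f,\cdot)=\tfrac12 \nabla f$, i.e. $|\nabla f|^2 = f$ up to an additive constant (the usual $R+|\nabla f|^2 = f$ with $R=1$, so $|\nabla f|^2 = f-1$). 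The constraints $\sum\lambda_i = 1$ and $\sum\lambda_i^2 = \tfrac12$ already pin down the eigenvalues considerably: they are consistent with, for instance, three zero eigenvalues and two eigenvalues equal to $\tfrac12$, which is exactly the Ricci spectrum of $\mathbb{R}^3\times\mathbb{S}^2$.

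The core of the argument is to promote these pointwise algebraic constraints to a \emph{rigid} eigenvalue structure, showing the Ricci eigenvalues are constant and equal to $(0,0,0,\tfrac12,\tfrac12)$, so that $\Ric$ has constant rank $2$. Here I would exploit the bounded-curvature hypothesis together with the elliptic equation satisfied by $\Ric$ along the Ricci flow / in the weighted sense, namely $\Delta_f \Ric = \Ric - 2\,\mathrm{Rm}(\Ric,\cdot)$ (the Lichnerowicz-type identity $\Delta_f R_{ij} = R_{ij} - 2 R_{ikjl}R_{kl}$). Testing this against $\Ric$ and integrating against the weighted measure $e^{-f}\,d\vol$ — which is finite and allows integration by parts with no boundary term because of the quadratic growth of $f$ and the curvature bound — should yield an integral identity forcing $|\nabla \Ric|^2$ and the relevant curvature contractions into a sign-definite relation. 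The aim is an integral inequality of Bochner type whose equality case is precisely radial flatness; bounded curvature is what guarantees the weighted integrals converge and the cutoff arguments close.

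The step I expect to be the main obstacle is ruling out the "intermediate" eigenvalue configurations that satisfy both $\sum\lambda_i=1$ and $\sum\lambda_i^2=\tfrac12$ but do not correspond to a product: the two scalar constraints alone admit a one-parameter family of solutions, so additional curvature information is essential. To eliminate these I would bring in the Gauss–Bonnet–Chern formula flagged in the paper's keywords, applying it on large geodesic balls or level sets of $f$ and using the bounded-curvature hypothesis to control the boundary integrals; comparing the topological/curvature-integral identity against the soliton structure should force the multiplicities to be the rigid ones. Concretely, I would study the level sets $\Sigma_c = \{f=c\}$, which for large $c$ are smooth hypersurfaces, compute their induced geometry via the second fundamental form $\Hess f|_{\Sigma_c}$, and show the cross-sections are forced to be (quotients of) round $\mathbb{S}^2$ factors times a flat direction. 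Once constant rank $2$ is established, Fernández-López–García-Río gives rigidity, and matching $R=1$, $n=5$, and the Einstein factor of positive scalar curvature with $|\Ric|^2=\tfrac12$ identifies the compact factor as $\mathbb{S}^2$, yielding the finite quotient of $\mathbb{R}^3\times\mathbb{S}^2$.
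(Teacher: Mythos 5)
Your high-level plan (reduce to radial flatness / constant Ricci rank, exploit $\sum\lambda_i=1$ and $\sum\lambda_i^2=\tfrac12$, and invoke Gauss--Bonnet--Chern on level sets) points in the same general direction as the paper, but the two steps you lean on hardest do not work as described. Integrating the Lichnerowicz identity against $e^{-f}\,d\vol$ only yields $\int_M |\nabla \Ric|^2 e^{-f}\,d\vol = \int_M \bigl(2R_{ikjl}R_{kl}R_{ij}-\tfrac12\bigr)e^{-f}\,d\vol$, and the right-hand side has no sign: the contraction $2\sum_{i,j} K_{ij}\lambda_i\lambda_j$ is precisely the quantity whose control constitutes the entire difficulty. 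The paper never obtains a globally sign-definite Bochner identity; it proves the pointwise inequality $|\nabla \Ric|^2\le -0.9999(\lambda_1+\lambda_2+\lambda_3)+1.01(K_{12}+K_{13}+K_{23})$ outside a compact set, and even that requires knowing first that $\lambda_1+\lambda_2+\lambda_3\to 0$ uniformly at infinity. That uniform decay is a substantial theorem on its own: it rests on classifying four-dimensional type I ancient $\kappa$-solutions with scalar curvature $\tfrac{1}{-t}$ via Naber's asymptotic-soliton analysis together with a topological argument identifying the blow-up and blow-down quotients. Your proposal omits this step entirely, and without it the $o(1)$ absorptions that rigidify the eigenvalue configuration are unavailable.

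Likewise, ``comparing the Gauss--Bonnet--Chern identity against the soliton structure'' is not yet an argument; the paper makes it quantitative through three ingredients you do not supply: the sharp algebraic bound $(W^{\Sigma}_{13})^2\le\tfrac1{12}|W^{\Sigma}|^2$ on the four-dimensional level sets; Gauss--Bonnet--Chern with $\chi(\Sigma(s))=4$ (since $\Sigma(s)\cong\mathbb S^2\times\mathbb S^2$) to bound $\int_{\Sigma(s)}|W^{\Sigma}|^2$; and, crucially, the lower bound $|M_-|\ge 8\pi$ on the focal variety $f^{-1}(0)$ (proved by showing $\nabla\Ric=0$ on $M_-$ and applying the two-dimensional Gauss--Bonnet formula to that minimal surface), which gives $\vol(\Sigma(s))\ge 128\pi^2 s$ and is exactly what makes the Cauchy--Schwarz step produce the constant $\tfrac16$ needed for the final inequality to close. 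A further missing piece is the monotonicity argument on $s\mapsto \tfrac1s\int_{\Sigma(s)}(\lambda_1+\lambda_2+\lambda_3)\,d\sigma$ that forces $\int_{\Sigma(s)}\langle\nabla(\lambda_1+\lambda_2+\lambda_3),\nabla f\rangle\,d\sigma\le 0$. Your closing step (constant rank plus Fern\'andez-L\'opez--Garc\'{\i}a-R\'{\i}o) is a legitimate alternative to the paper's analyticity-plus-De-Rham ending, and your normalization slip ($|\nabla f|^2=f$ after replacing $f$ by $f-R$, not $f-1$) is minor, but the core analytic content of the proof is absent from the proposal.
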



\smallskip
The strategy of the proof of Theorem \ref{maintheorem} follows \cite{Li-Ou-Qu-Wu} but with crucial differences. The argument can be outlined as follows.

First, we consider the asymptotic behavior of the sum of the smallest three Ricci eigenvalues $\lambda_1+\lambda_2+\lambda_3$ that is non-negative (Lemma \ref{le123}) due to constant scalar curvature $R=1$, where $\lambda_1\leq\lambda_2\leq\lambda_3$ are the three smallest Ricci-eigenvalue. Hence, we can always assume that $\lambda_2=0$ corresponding to the Ricci-eigenvector $e_2=\frac{\nabla f}{|\nabla f|}$ on $M \setminus f^{-1}(0)$. To prove $\lambda_1+\lambda_2+\lambda_3\rightarrow 0$ at infinity, we need to classify $4$-dimensional type I ancient $\kappa$-solution with scalar curvature $\frac{1}{-t}$,  whose proof involves delicate application of Naber's  analysis of the asymptotic limit. Combine this classification with standard blow up analysis, we derive that $\lambda_1+\lambda_2+\lambda_3\rightarrow 0$ at infinity.

Then, we prove that  $\lambda_1+\lambda_2+\lambda_3=0$ holds outside a compact set of $M$. In order to achieve that, we need to explore the asymptotic behavior of $|\nabla Ric|^2$.
One key difficulty is that the curvature term $K_{12}+K_{13}+K_{23}$ appears,
 where $K_{ij}$ denotes the sectional curvature of the plane spanned by $e_i$ and $e_j$, and $\{e_i\}_{i=1}^n$ is the orthonormal eigenvectors corresponding to the Ricci-eigenvalue $ \{\lambda_i\}_{i=1}^3$.
  Here we must estimate the curvature term more precisely: We still use the Gauss equation to express $K_{13}$ in terms of the eigenvalues of Ricci curvature and $W^\Sigma_{13}$, and the most difficult thing is to control $W^\Sigma_{13}$. The idea is to use four-dimensional Gauss-Bonnet-Chern formula, but one has to estimate $W^\Sigma_{13}$ in a sharp way by algebraic inequality. Here, $W^\Sigma_{ij}=W^\Sigma(e_i, e_j, e_i, e_j)$, is the Weyl curvature of the level set of $f$ spanned by $e_i$ and $e_j$.
 There is still another important fact that the intrinsic volume of the focal variety $M_{-}=f^{-1}(0)$ is at least $8\pi$ (see Section \ref{section4}) and it plays an important role in the integration argument.

 Finally, applying the analyticity of Ricci soliton and the De Rham's splitting theorem,
 we can prove Theorem \ref{maintheorem}.

\medskip
The paper is organized as follows.
	In Section \ref{section2}, we recall the notations and basic formulas on  shrinking  gradient Ricci solitons with constant scalar curvature.
	In Section \ref{section3}, we prove that $\lambda_1+\lambda_2+\lambda_3$ tends to zero at infinity.
	In Section \ref{section4}, we prove that the intrinsic volume of the focal variety $M_{-}=f^{-1}(0)$ is at least $8\pi$.
	In Section \ref{section5}, we finish the proof of Theorem \ref{maintheorem}.

\section{Preliminary}\label{section2}
In this section, we recall the notations and basic formulas on gradient shrinking Ricci solitons with constant scalar curvature. For details, we refer to \cite{Cao,Hamilton,Petersen-Wylie2,Cheng-Zhou}.

\smallskip
Let  $(M, g)$ be an $n$-dimensional complete gradient shrinking Ricci soliton satisfying \eqref{soliton}.
By scaling the metric $g$,  one can normalize $\lambda$ so that $\lambda=\frac{1}{2}$. In this paper, we always assume $\lambda=\frac{1}{2}$ and the gradient shrinking Ricci soliton equation is as follows,
\begin{align}\label{soliton'}
	\text{Ric}+\nabla^2f=\frac{1}{2} g.
\end{align}

At first we recall some basic formulas which will be used during the paper.
\ba
&&d R=2 Ric(\nabla f),\\ \label{second bianchi}
&& R+\Delta f=\frac{n}{2},\\ \label{R}
&& R+|\nabla f|^2=f,\\  \label{f he tidu}
&& \Delta_f R=R-2|Ric|^2,\\
&& \Delta_f R_{ij}=R_{ij}-2 R_{ikjl}R_{kl},\\\label{elliptic equation}
&& \Delta_f |R_{ij}|^2=2|R_{ij}|^2+2|\nabla R_{ij}|^2-4R_{ijij}R_{ii}R_{jj},\\
&& -R_{ikjl} f_l f_k=\frac{1}{2}\nabla_i\nabla_j R+\nabla_k f_{ij}f_k +R_{kj}(R_{ik}-\frac{1}{2}g_{ik})
\ea
where  $\Delta_f Ric=\Delta Ric -\nabla_{\nabla f}Ric$  in the above formula and $(2.7)$ was prove in \cite{Petersen-Wylie2}.

\smallskip
Next we state the estimate of potential function $f$ in Cao-Zhou \cite{Cao-Zhou}.
\begin{theo}[\cite{Cao-Zhou}]
Suppose $(M^n, g, f)$ is an noncompact shrinking gradient Ricci soliton, then there exist $C_1$ and $C_2$ such that
\ba \label{potential estimate}
\left(\frac{1}{2}d(x, p)-C_1\right)^2\leq f(x)\leq \left(\frac{1}{2}d(x, p)+C_2\right)^2,
\ea
where $p$ is the minimal point of  $f$.
\end{theo}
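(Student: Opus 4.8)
The plan is to establish the sharp two-sided bound $\tfrac12 d(x,p)-C_1\le \sqrt{f(x)}\le \tfrac12 d(x,p)+C_2$, from which \eqref{potential estimate} follows by squaring. Throughout I invoke the normalized identity \eqref{R}, that is $R+|\nabla f|^2=f$, together with the well-known fact that a complete shrinking gradient Ricci soliton has $R\ge 0$. Consequently $f=R+|\nabla f|^2\ge 0$, the point $p$ is a minimum of $f$ so that $\nabla f(p)=0$, and one has the pointwise bound $|\nabla f|^2=f-R\le f$ everywhere.

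The upper bound is immediate from this last inequality. Where $f>0$ it gives $|\nabla\sqrt f|=\tfrac{|\nabla f|}{2\sqrt f}\le\tfrac12$, so $\sqrt f$ is $\tfrac12$-Lipschitz; integrating along a minimal geodesic from $p$ to $x$ yields $\sqrt{f(x)}\le\sqrt{f(p)}+\tfrac12 d(x,p)$, which is the right-hand inequality with $C_2=\sqrt{f(p)}$. In particular one obtains the crude gradient bound $|\nabla f|\le \tfrac12 d(\cdot,p)+C_2$, which will feed into the lower bound.

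For the lower bound I would restrict $f$ to a unit-speed minimal geodesic $\gamma:[0,s_0]\to M$ from $p$ to $x$, with $s_0=d(x,p)$, and set $\phi(s)=f(\gamma(s))$. The soliton equation \eqref{soliton'} gives $\phi''(s)=\nabla^2 f(\gamma',\gamma')=\tfrac12-\Ric(\gamma',\gamma')$, while $\phi'(0)=\langle\nabla f(p),\gamma'(0)\rangle=0$. Integrating once yields $\phi'(s_0)=\tfrac12 s_0-\int_0^{s_0}\Ric(\gamma',\gamma')\,ds$. Since $\phi'(s_0)=\langle\nabla f(x),\gamma'(s_0)\rangle\le|\nabla f(x)|\le\sqrt{f(x)}$, the entire lower bound reduces to the integral Ricci estimate $\int_0^{s_0}\Ric(\gamma',\gamma')\,ds\le C_1$ for a constant depending only on $n$: granting it, $\sqrt{f(x)}\ge\tfrac12 d(x,p)-C_1$, and squaring completes \eqref{potential estimate}.

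To prove the integral Ricci bound I would use that $\gamma$ is minimizing, so its index form is nonnegative: for every $\psi$ with $\psi(0)=\psi(s_0)=0$ one has $\int_0^{s_0}\Ric(\gamma',\gamma')\,\psi^2\,ds\le (n-1)\int_0^{s_0}(\psi')^2\,ds$, obtained by summing the second variation over a parallel orthonormal frame perpendicular to $\gamma'$. Testing with the plateau function that rises linearly from $0$ to $1$ on $[0,1]$, equals $1$ on $[1,s_0-1]$, and falls linearly to $0$ on $[s_0-1,s_0]$ bounds $\int_1^{s_0-1}\Ric(\gamma',\gamma')\,ds$ by $2(n-1)$, up to the contributions of the two unit-length end intervals. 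The contribution near $p$ is controlled by $\sup_{B_1(p)}|\Ric|$. The main obstacle is the end interval near $x$, where no a priori pointwise Ricci bound is available; here I would exploit the soliton structure, rewriting $\Ric(\gamma',\gamma')=\tfrac12-\phi''$ and combining the crude gradient bound $|\nabla f|\le\tfrac12 d(\cdot,p)+C_2$ with the minimality of $\gamma$ to absorb this contribution into $C_1$. This far-endpoint control is the technical heart of the argument; once it is in place the estimate $\int_0^{s_0}\Ric(\gamma',\gamma')\,ds\le C_1$ follows, yielding the lower bound and hence \eqref{potential estimate}.
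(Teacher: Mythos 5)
The paper itself gives no proof of this statement — it is quoted from Cao--Zhou — so your proposal must be measured against their original argument. Your upper bound is exactly theirs ($R\ge 0$ by B.-L.~Chen, hence $\sqrt f$ is $\tfrac12$-Lipschitz), and your lower-bound architecture — second variation with Perelman's plateau function along a minimal geodesic, $\phi''=\tfrac12-\Ric(\gamma',\gamma')$, and $\sqrt{f(x)}\ge|\nabla f(x)|\ge\phi'(s_0)$ — is also theirs. But there is a genuine gap at precisely the step you flag as the ``technical heart'': the intermediate claim $\int_0^{s_0}\Ric(\gamma',\gamma')\,ds\le C_1$ with $C_1$ independent of $s_0$ is not established by your sketch, and the proposed absorption fails quantitatively. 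Writing $\psi$ for the plateau function, integration by parts on the far interval gives
\ban
\int_{s_0-1}^{s_0}(1-\psi^2)\,\Ric(\gamma',\gamma')\,ds=\frac13-\phi'(s_0)+2\int_{s_0-1}^{s_0}\psi\,\phi'\,ds,
\ean
and the crude bound $|\nabla f|\le\tfrac12 d(\cdot,p)+C_2$ only controls $-\phi'(s_0)$ and $2\int\psi\,\phi'$ by quantities of order $s_0/2$, so this route yields $\int_0^{s_0}\Ric\le C+O(s_0)$, not a constant; a constant bound would require $\phi'(s_0)\ge\sqrt{f(x)}-C$, i.e.\ that $\nabla f(x)$ be nearly parallel to $\gamma'(s_0)$ with nearly maximal length, which is not available a priori.

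The repair — and what Cao--Zhou actually do — is to abandon the constant integral bound as an intermediate target. Substitute $\phi'(s_0)=\phi'(0)+\tfrac{s_0}{2}-\int_0^{s_0}\Ric(\gamma',\gamma')\,ds$ into the combined second-variation estimate: the Ricci integral then cancels from both sides, leaving
\ban
\frac{s_0}{2}\le 2(n-1)+\max_{B_1(p)}|\Ric|+\frac23-\phi'(0)+2\int_{s_0-1}^{s_0}\psi\,\phi'\,ds,
\ean
and the surviving tail term is bounded by the unknown itself: since $\sqrt f$ is $\tfrac12$-Lipschitz, $\phi'(s)\le\sqrt{f(\gamma(s))}\le\sqrt{f(x)}+\tfrac{s_0-s}{2}$, whence $2\int_{s_0-1}^{s_0}\psi\,\phi'\,ds\le\sqrt{f(x)}+\tfrac13$. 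This gives $\sqrt{f(x)}\ge\tfrac{s_0}{2}-C$ directly, with no need for a uniform bound on $\int\Ric$. One minor further point: you base $\gamma$ at the minimum point and use $\phi'(0)=0$, but the existence of a minimum (properness of $f$) is part of what the estimate delivers; Cao--Zhou base at an arbitrary point $x_0$ and use $\phi'(0)\ge-|\nabla f|(x_0)$, which costs only a constant.
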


Given  a shrinking gradient Ricci soliton $(M^n, g, f)$, it generate a Ricci flow solution. Actually, consider the family of diffeomorphisms defined by
\ban
&&\frac{d\phi}{dt}=\frac{\nabla f}{-t}\\
&& \phi_{-1}=Id,
\ean
then $g(t)=(-t)\phi_t^* g$ is an ancient solution to the Ricci flow defined on $(-\infty,  0)$. Next following \cite{Naber}, we give the definition when a Ricci flow solution $(M^n, g(t))$ is called  $(C, \kappa)$-controlled.

\begin{defn}Let $(M^n, g(t))$  with $t\in (-\infty, 0)$ be a Ricci flow of complete Riemannian manifold. We say $(M^n, g(t))$ is $C$-controlled if
$|Rm(g(t))|\leq \frac{C}{|t|}$ and that it is $(C, \kappa)$-controlled if it is additionally $\kappa$-noncollapsed.
\end{defn}
Then we can state Naber's Theorem as follows.

\begin{theo}[Naber, \cite{Naber}]\label{naber's theorem}Let $(M^n, g(t))$ be a $(C, \kappa)$-controlled Ricci flow with $x\in M$. Let $\tau_i^-\rightarrow 0$ and $\tau_i^+\rightarrow\infty$ with $g_i^\pm(t)=(\tau_i^\pm)^{-1}g(\tau_i^\pm t)$. Then after possibly passing to a subsequence, $(M, g_i^\pm (t), (x, -1))\rightarrow (S^\pm, h^\pm (t), (x^\pm, -1))$, where $(S^\pm, h^\pm (t), (x^\pm, -1))$ are $(C, \kappa)$-controlled shrinking gradient Ricci solitons which are normalized at $t=-1$. Furthermore, if $S^\pm$ are isometric, then $(M, g(t))$
is also isometric to $S^\pm$ and hence a shrinking soliton.
\end{theo}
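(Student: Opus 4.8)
The plan is to establish the three assertions of Theorem~\ref{naber's theorem}---subsequential convergence, self-similarity of the two limits, and the rigidity clause---by combining Hamilton's compactness theorem for Ricci flows with the monotonicity and equality case of Perelman's reduced volume.

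\emph{Step 1 (Compactness).} Each rescaled family $g_i^\pm(t)=(\tau_i^\pm)^{-1}g(\tau_i^\pm t)$ is again a Ricci flow on $(-\infty,0)$, and parabolic rescaling preserves both the scale-invariant bound $|Rm|\le C/|t|$ and the $\kappa$-noncollapsing. In particular, on every compact subinterval $[a,b]\subset(-\infty,0)$ the curvatures of $g_i^\pm$ are bounded uniformly in $i$, and the noncollapsing together with this bound yields a uniform positive lower bound for the injectivity radius at the basepoint $(x,-1)$. Hamilton's compactness theorem then provides a subsequence converging in the pointed $C^\infty$ Cheeger--Gromov sense to a complete ancient limit flow $(S^\pm,h^\pm(t),(x^\pm,-1))$ on $(-\infty,0)$; the bounds $|Rm(h^\pm)|\le C/|t|$ and the $\kappa$-noncollapsing survive in the limit, so each $S^\pm$ is again $(C,\kappa)$-controlled.

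\emph{Step 2 (The limits are shrinking solitons).} Based at the spacetime point $(x,0)$ sitting at the forward singular time, I would use Perelman's reduced length $\ell(\cdot,\tau)$ and reduced volume $\tilde V(\tau)=\int_M(4\pi\tau)^{-n/2}e^{-\ell(\cdot,\tau)}\,dV_{g(-\tau)}$, defined for $\tau=-t\in(0,\infty)$ via $\mathcal L$-geodesics running backward into the smooth region $t<0$. This quantity is monotone nonincreasing, so it has limits $V_0:=\lim_{\tau\to0^+}\tilde V(\tau)$ and $V_\infty:=\lim_{\tau\to\infty}\tilde V(\tau)$ with $V_0\ge V_\infty$, and noncollapsing keeps $V_\infty>0$. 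Since the reduced volume is scale invariant, $\tilde V_{g_i^\pm}(\tau)=\tilde V_g(\tau_i^\pm\tau)$, and letting $\tau_i^-\to0$ (resp. $\tau_i^+\to\infty$) shows that for every fixed $\tau$ the reduced volume of the limit $S^-$ (resp. $S^+$) equals the constant $V_0$ (resp. $V_\infty$). A reduced volume that is constant in $\tau$ is precisely the equality case of Perelman's monotonicity formula, which forces the limit flow to be self-similar: there is a potential $f^\pm$ with $\Ric+\nabla^2f^\pm=\frac{1}{2\tau}h^\pm$, which at $t=-1$ is the normalized shrinking soliton equation~\eqref{soliton'}.

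\emph{Step 3 (Rigidity).} If $S^+$ and $S^-$ are isometric, then as shrinking solitons they share the same reduced-volume invariant, so $V_0=V_\infty$. A monotone function with equal limits at both ends is constant, hence $\tilde V_g\equiv V_0$ on all of $(0,\infty)$. Applying the equality case of the monotonicity formula now to the \emph{original} flow shows that $(M,g(t))$ itself satisfies the gradient shrinking soliton equation; being self-similar with tangent flows $S^\pm$, it is isometric to $S^\pm$. The delicate point is Step~2 together with this rigidity: one must make the equality case of Perelman's reduced-volume monotonicity rigorous. The reduced length $\ell$ is only locally Lipschitz and satisfies the governing differential (in)equalities in the barrier sense, so the implication ``constant reduced volume $\Rightarrow$ soliton'' requires careful regularity theory for $\ell$ and for $\mathcal L$-geodesics, and one must verify that both the monotonicity and its equality case pass through the Cheeger--Gromov limit. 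The forward direction $\tau_i^-\to0$ is the harder one, since the base point $(x,0)$ lies at the singular time; it is exactly here that the $\kappa$-noncollapsing and the bound $|Rm|\le C/|t|$ are needed to guarantee that $V_0$ exists, is finite and positive, and that the limiting reduced length descends to a well-defined potential on $S^-$.
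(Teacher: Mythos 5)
The paper offers no proof of this statement---it is quoted directly from Naber \cite{Naber}---so the only meaningful comparison is with Naber's original argument, and your sketch reproduces it essentially verbatim: Hamilton--Cheeger--Gromov compactness using the scale-invariant $(C,\kappa)$ bounds, Perelman's reduced volume based at the singular time $(x,0)$, self-similarity of the limits via the equality case of the monotonicity formula, and rigidity from the elementary fact that a monotone quantity with equal limits at $\tau\to 0$ and $\tau\to\infty$ is constant. You also correctly isolate the genuine technical crux (defining $\ell$ and $\tilde V$ based at the singular time and passing the monotonicity and its equality case through the pointed limit), which is precisely where the $|Rm|\le C/|t|$ bound and $\kappa$-noncollapsing enter in Naber's paper.
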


The following Corollary will also be important to us.
\begin{coro}[Naber, \cite{Naber}]\label{naber's splitting theorem}
For any $n$-dimensional shrinking gradient Ricci soliton $(M^n, g, f)$  with bounded curvature and a sequence of points $x_i\in M$ going to infinity along an integral curve of $\nabla f$, by choosing a subsequence if necessary, $(M, g, x_i)$ converges smoothly to a product manifold $\mathbb R\times  N^{n-1}$,  where $ N$ is a shrinking gradient Ricci soliton.
\end{coro}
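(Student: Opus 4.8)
The plan is to realize the limit as a pointed Cheeger--Gromov limit with no rescaling, and then to split off a line by showing that the normalized gradient field becomes parallel.

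First I would extract the limit. Since $M$ has bounded curvature, the scalar curvature $R$ is bounded and $|\nabla^2 f|=|\frac{1}{2}g-\Ric|$ is uniformly bounded; Shi's estimates together with the soliton identities in \eqref{soliton'} then bound every covariant derivative $\nabla^k \Ric$, hence all $\nabla^{k}\nabla^2 f$. A bounded-curvature shrinking soliton is moreover $\kappa$-noncollapsed on all scales (equivalently, the associated self-similar flow $g(t)=(-t)\phi_t^*g$ is $(C,\kappa)$-controlled), so the basepoints carry a uniform injectivity-radius lower bound. Thus the sequence $(M,g,x_i)$ has uniformly bounded geometry, and by the Cheeger--Gromov--Hamilton compactness theorem a subsequence converges in $C^\infty_{\mathrm{loc}}$ to a complete pointed limit $(M_\infty,g_\infty,x_\infty)$.

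Next I would produce a parallel unit field on the limit. Set $u=\nabla f/|\nabla f|$ where $\nabla f\neq 0$. On a fixed ball $B(x_i,r)$ the Cao--Zhou estimate \eqref{potential estimate} forces $f\to\infty$ uniformly, since $d(\cdot,p)\ge d(x_i,p)-r\to\infty$; because $R$ is bounded, $|\nabla f|^2=f-R\to\infty$ uniformly there, so in particular $u$ is well defined near $x_i$ for $i$ large. A direct computation gives
\be
\nabla_X u=\frac{(\nabla^2 f)(X)}{|\nabla f|}-\frac{\nabla^2 f(X,\nabla f)}{|\nabla f|^3}\,\nabla f ,
\ee
where $(\nabla^2 f)(X)$ is the image of $X$ under the Hessian regarded as an endomorphism; hence $|\nabla u|\le C/|\nabla f|\to 0$ uniformly on $B(x_i,r)$. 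Since $|u|\equiv 1$ and all higher derivatives of $u$ are controlled by the bounds above, a further subsequence of $u$ converges in $C^\infty_{\mathrm{loc}}$ to a field $u_\infty$ on $M_\infty$ with $|u_\infty|\equiv 1$ and $\nabla u_\infty=0$. As $M_\infty$ is complete and admits a nontrivial parallel vector field, the de Rham splitting theorem yields an isometric splitting $M_\infty=\mathbb R\times N^{n-1}$ with $u_\infty$ tangent to the $\mathbb R$-factor.

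Finally I would identify the limit as a soliton. The limit of shrinking gradient Ricci solitons with uniformly controlled geometry is again a shrinking gradient Ricci soliton: regarding $g$ as the time $(-1)$-slice of the self-similar flow above, Hamilton's compactness for Ricci flows produces a limit flow that remains self-similar, giving $\Ric_{g_\infty}+\nabla^2 f_\infty=\frac{1}{2}g_\infty$ on $M_\infty$. Writing $f_\infty=\frac{1}{4}s^2+\phi$ in the splitting coordinate $s$ dual to $u_\infty$ and using $\Ric_{g_\infty}=0\oplus\Ric_{g_N}$, one reads off $\Ric_{g_N}+\nabla^2\phi=\frac{1}{2}g_N$, so $N$ is a shrinking gradient Ricci soliton. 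I expect this last step to be the main obstacle: because $f$ and $|\nabla f|$ blow up along the integral curve, one cannot simply pass $f$ itself to a limit on the static manifold, and the soliton structure of $M_\infty$ must be recovered from the self-similar Ricci flow (as in Theorem \ref{naber's theorem}) rather than from a naive limit of the potential.
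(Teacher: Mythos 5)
The paper offers no proof of this corollary --- it is quoted directly from Naber \cite{Naber} --- so your attempt can only be measured against Naber's original argument and against the closely analogous blow-up the paper performs in proving Theorem \ref{uniform tend to zero}. Your outline is essentially that argument and is correct in substance: bounded curvature plus $\kappa$-noncollapsing gives the pointed $C^\infty$ limit, the bound $|\nabla^2 f|=|\frac12 g-\Ric|\leq C$ together with $|\nabla f|^2=f-R\to\infty$ (via \eqref{potential estimate}) forces the normalized gradient to become parallel, and the soliton structure of the limit must indeed be recovered from the self-similar flow rather than from $f$ itself. Two refinements are needed to make it airtight. First, a complete manifold carrying a parallel unit vector field need not split off a global $\mathbb{R}$-factor (the field could close up into a circle factor); you should instead pass to the limit the normalized potentials $f_i=(f-f(x_i))/|\nabla f(x_i)|$, exactly as the paper does with $f_j$ in Theorem \ref{uniform tend to zero}: these are globally defined, satisfy $\nabla^2 f_i\to 0$ and $|\nabla f_i|(x_i)=1$, so their limit $b$ is a globally defined affine function with $|\nabla b|\equiv 1$, and the isometric splitting $M_\infty=\mathbb{R}\times b^{-1}(0)$ follows. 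Second, your appeal to ``a limit flow that remains self-similar'' is precisely where the hypothesis that the $x_i$ lie on a single integral curve of $\nabla f$ enters, and you never invoke it: writing $x_i=\phi_{t_i}(x_0)$ with $t_i\to 0^-$, one has $(M,g,x_i)$ isometric to $(M,\tau_i^{-1}g(-\tau_i),x_0)$ with $\tau_i=-t_i\to 0$, so Theorem \ref{naber's theorem} applies verbatim and shows the limit is a shrinking gradient Ricci soliton; for an arbitrary sequence tending to infinity this step would not be justified. With these two points made explicit, your final reduction --- splitting the limit potential as $\frac14 s^2+\phi$ after a translation in $s$ and reading off $\Ric_{g_N}+\nabla^2\phi=\frac12 g_N$ --- is fine.
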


\smallskip
Now we consider complete gradient shrinking Ricci solitons with constant scalar curvature $R$. In this case, the potential function $f$ is isoparametric and the isoparametric property of plays a very important role.
concretely, the potential function $f$ can be renormalized, by replacing $f-R$  with $f$, so that $f:M\to [0, +\infty)$,
\begin{equation}\label{iso1}
	|\nabla f|^2=f,
\end{equation}
which implies that $f$ is transnormal.
Recall \eqref{R}
\begin{equation}\label{R'}
	\Delta f=\frac{n}2-R.
\end{equation}
Therefore the (nonconstant) renormalized $f$ is an isoparametric function on $M$. From the potential function estimate \eqref{potential estimate},  $f$ is proper and unbounded.
By the theory of isoparametric functions, Cheng-Zhou  \cite{Cheng-Zhou}
derived the following results.
\begin{theo}[\cite{Cheng-Zhou}]\label{levelset}
	Let $(M, g, f)$
	be a $5$-dimensional complete noncompact gradient shrinking Ricci soliton satisfying \eqref{soliton'} with constant scalar curvature $R=1$ and let $f$ be normalized as
	\[
	|\nabla f|^2=f.
	\]
	Then the following results hold.
	
	\smallskip	
	{\rm (i)} $M_{-}=f^{-1}(0)$ is a  $2$-dimensional compact and  connected minimal submanifold of $M$.
	
	\smallskip	
	{\rm (ii)} The function $f$ can be expressed as
	\ban
	f(x)=\frac{1}{4} \text{dist}^2 (x, M_{-}).
	\ean
	
	{\rm (iii)} For any point $p\in M_{-}$, $\nabla^2f$ has two eigenspaces $T_pM_{-}$ and $\nu_pM_{-}$ corresponding eigenvalues $0$ and $\frac12$, and $\dim(M_{-})=2$.\\
	
	\smallskip	
	{\rm (iv)} Let $D_a:=\{x\in M:\,f(x)\leq a\},$ for $t>0$.
The volume of the set $D_a$ satisfies
	\[\textrm{Vol} (D_a)= \frac{32}{3}\pi a^\frac{3}{2}|M_{-}|, \,\, \textrm{Vol} (\Sigma_a)= 16\pi a|M_{-}|\]
 $|M_{-}|$ denotes the volume of the submanifold $M_{-}$.
\end{theo}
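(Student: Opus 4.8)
The plan is to use that, after the normalization $|\nabla f|^2=f$ in \eqref{iso1}, the potential $f$ is a transnormal function whose only critical value is the minimum $0$, attained exactly on $M_{-}=f^{-1}(0)$ because $\nabla f=0$ if and only if $f=0$. Two identities coming from \eqref{soliton'} and \eqref{iso1} do most of the work: differentiating $|\nabla f|^2=f$ yields $\Hess f(\nabla f,\cdot)=\frac12\,df$, so $\nabla f$ is an eigenvector of $\Hess f$ with eigenvalue $\frac12$; the identity $dR=2\,\Ric(\nabla f)$ with $R\equiv 1$ gives $\Ric(\nabla f)=0$; and \eqref{R'} becomes $\Delta f=\frac{n}{2}-R=\frac32$. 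I first record that the unit field $\nu=\nabla f/|\nabla f|$ has geodesic integral curves, since $\nabla_{\nabla f}\nabla f=\frac12\nabla f$ implies $\nabla_\nu\nu=0$. Properness of $f$, from \eqref{potential estimate}, makes $M_{-}$ compact; the negative gradient flow has no critical points off $M_{-}$ and hence deformation retracts $M$ onto $M_{-}$, so connectedness of $M$ forces $M_{-}$ to be connected. That $M_{-}$ is a smooth embedded submanifold I take from the structure theory of transnormal functions, whose minimum set (focal variety) is smooth; this settles (i) apart from minimality.

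For (ii), I integrate $f$ along a unit-speed geodesic $\gamma$ with $\gamma'=\nu$ leaving $M_{-}$ orthogonally: $\frac{d}{ds}f(\gamma(s))=\langle\nabla f,\gamma'\rangle=|\nabla f|=\sqrt{f}$, so $\frac{d}{ds}(2\sqrt{f})=1$, and $f(\gamma(0))=0$ gives $f(\gamma(s))=\frac{s^2}{4}$. These geodesics foliate $M\setminus M_{-}$ and meet no focal point at finite distance, so $s=\mathrm{dist}(\cdot,M_{-})$ and therefore $f=\frac14\,\mathrm{dist}^2(\cdot,M_{-})$.

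For (iii) I combine (ii) with the standard fact that the Hessian of $\frac12\,\mathrm{dist}^2(\cdot,M_{-})$ at a point $p\in M_{-}$ is the orthogonal projection onto the normal space $\nu_pM_{-}$. Thus $\Hess f$ has eigenvalue $0$ on $T_pM_{-}$ and $\frac12$ on $\nu_pM_{-}$; taking the trace, $\Delta f=\frac12\dim\nu_pM_{-}=\frac32$ forces $\dim\nu_pM_{-}=3$, that is $\dim M_{-}=2$. For minimality I use the tube (Riccati) expansion of the distance function along $\gamma_\xi$: the mean curvature of the distance spheres obeys $\Delta r=\frac{\dim\nu_pM_{-}-1}{r}-\langle\vec{H},\xi\rangle+O(r)$, while (ii) together with \eqref{R'} gives $\Delta r\equiv\frac{2}{r}$; matching the $O(1)$ terms yields $\langle\vec{H},\xi\rangle=0$ for every unit normal $\xi$, so the mean curvature vector of $M_{-}$ vanishes and $M_{-}$ is minimal.

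Finally, for (iv) the second fundamental form of $\Sigma_a$ with respect to $\nu$ is $\Hess f/|\nabla f|$ on $T\Sigma_a$, so its mean curvature is $H=(\Delta f-\Hess f(\nu,\nu))/|\nabla f|=(\frac32-\frac12)/\sqrt{a}=1/\sqrt{a}$. Deforming $\Sigma_a$ by the field $\nabla f/|\nabla f|^2$, whose normal speed is $1/\sqrt{a}$, the first variation of area gives $\frac{d}{da}\vol(\Sigma_a)=\frac1a\vol(\Sigma_a)$, hence $\vol(\Sigma_a)=Ca$. The constant is pinned down by the small-radius tube asymptotic $\vol(\Sigma_a)\sim 4\pi r^2|M_{-}|=16\pi a|M_{-}|$ as $r=2\sqrt{a}\to0$ (the area of the normal round $2$-sphere of radius $r$ times $|M_{-}|$), so $\vol(\Sigma_a)=16\pi a|M_{-}|$; the coarea formula then gives $\vol(D_a)=\int_0^a\vol(\Sigma_t)/\sqrt{t}\,dt=\frac{32}{3}\pi a^{3/2}|M_{-}|$. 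The main obstacle is the global regularity underlying (i) and (ii)—showing $M_{-}$ is a smooth embedded submanifold and that the normal exponential map is a diffeomorphism onto $M$ with no interior focal points—for which the full theory of transnormal/isoparametric functions is needed; the minimality step in (iii) is the other delicate point.
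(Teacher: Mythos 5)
The paper does not actually prove this statement: it is imported verbatim from Cheng--Zhou \cite{Cheng-Zhou}, with only the remark that $f$ is transnormal and isoparametric. Your proposal is, for all practical purposes, the proof given in that reference, and it is sound: the identities $\nabla^2 f(\nabla f,\cdot)=\tfrac12 df$, $\mathrm{Ric}(\nabla f)=0$, $\Delta f=\tfrac32$ give the geodesic normal flow and the ODE $f(\gamma(s))=s^2/4$; the trace of $\nabla^2 f=\tfrac12 P_{\nu M_-}$ on $M_-$ pins down $\dim M_-=2$; minimality follows from matching the Riccati expansion of $\Delta r$ against $\Delta r=2/r$; and the first-variation computation $\frac{d}{da}\mathrm{Vol}(\Sigma_a)=\frac1a\mathrm{Vol}(\Sigma_a)$ together with the tube asymptotic $\mathrm{Vol}(\Sigma_a)\sim 4\pi r^2|M_-|$, $r=2\sqrt a$, and the coarea formula yields (iv). You correctly isolate the one genuinely nontrivial input --- Wang's structure theory for transnormal functions, which guarantees that the focal variety $M_-$ is a smooth embedded submanifold and that the normal exponential map carries the normal disk bundle diffeomorphically onto $M$ with no finite focal points --- and this is exactly the ingredient Cheng--Zhou also take from the isoparametric literature rather than reprove.
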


\medskip
Finally, we give an useful estimate of the Weyl curvature tensor for a four-dimensional Riemannian manifold as follows.

\begin{prop}\label{12}
	For the Weyl curvature tensor of any four dimensional Riemannian manifold, we have
	\ban
	W_{12}^2\leq \frac{1}{12}|W|^2,
	\ean
	where $W_{ij}=W_{ijij}$ are the component of the Weyl curvature $W$.
\end{prop}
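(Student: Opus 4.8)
The plan is to exploit the two features that are special to dimension four: the Weyl tensor $W$ is totally trace-free, and this forces rigid linear relations among its ``sectional'' components. Fix an orthonormal basis $\{e_1,e_2,e_3,e_4\}$ at the point in question and abbreviate $a=W_{1212}$, $b=W_{1313}$, $c=W_{1414}$, $d=W_{2323}$, $e=W_{2424}$, $h=W_{3434}$. The trace-free condition $\sum_k W_{ikik}=0$ for each fixed $i$ (the $k=i$ term vanishing by antisymmetry) produces the four linear equations $a+b+c=0$, $a+d+e=0$, $b+d+h=0$, $c+e+h=0$. Solving this system is the first key step: it yields the classical four-dimensional identities $a=h$, $b=e$, $c=d$ together with the single relation $a+b+c=0$, i.e. opposite coordinate planes carry equal Weyl sectional curvature and the three distinct values sum to zero.

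Next I would bound $|W|^2$ from below by its sectional part. Using the symmetries $W_{ijkl}=-W_{jikl}=-W_{ijlk}=W_{klij}$, each unordered pair $\{i,j\}$ contributes exactly $4W_{ijij}^2$ to the sum $|W|^2=\sum_{ijkl}W_{ijkl}^2$, while all remaining mixed-index components add only non-negative terms. Discarding those gives
\[
|W|^2 \;\geq\; 4\bigl(a^2+b^2+c^2+d^2+e^2+h^2\bigr)\;=\;8\bigl(a^2+b^2+c^2\bigr),
\]
where the last equality uses the opposite-plane identities $a=h$, $b=e$, $c=d$. Finally, the trace relation $a+b+c=0$ gives $b+c=-a$, so the elementary inequality $2(b^2+c^2)\geq (b+c)^2$ yields $b^2+c^2\geq \tfrac12 a^2$, hence $a^2+b^2+c^2\geq \tfrac32 a^2$. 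Combining the two displays gives $|W|^2\geq 12\,a^2=12\,W_{12}^2$, which is exactly the asserted bound $W_{12}^2\leq \tfrac{1}{12}|W|^2$.

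The argument is entirely algebraic, so there is no analytic difficulty; the only points requiring care are the correct multiplicity count in $|W|^2$ and the clean derivation of the opposite-plane identities from the trace-free linear system. I expect this latter bookkeeping to be the main (though mild) obstacle, since one must be sure that the discarded mixed-index terms are genuinely non-negative and that no cross terms were double-counted; once that is verified, the concluding inequality is immediate.
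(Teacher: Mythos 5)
Your proof is correct and follows essentially the same route as the paper: both derive the opposite-plane identities and the relation $W_{12}+W_{13}+W_{14}=0$ from tracelessness, then combine the elementary bound $a^2\leq\frac{2}{3}(a^2+b^2+c^2)$ for $a+b+c=0$ with a comparison of $|W|^2$ to its sectional part. You are in fact slightly more careful than the paper, which asserts the equality $|W|^2=8(W_{12}^2+W_{13}^2+W_{14}^2)$ where only the inequality $|W|^2\geq 8(W_{12}^2+W_{13}^2+W_{14}^2)$ holds in general (mixed-index components such as $W_{1213}$ and $W_{1234}$ are discarded); your version with the inequality is the correct statement and still yields the desired conclusion.
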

\begin{proof}
	First, we claim that suppose $a$, $b$ and $c$ are three real numbers satisfying $a+b+c=0$,
	then
	\ban
	a^2\leq \frac{2}{3}(a^2+b^2+c^2).
	\ean
	In fact, by Cauchy-Schwarz inequality,
	\begin{align*}
		a^2+b^2+c^2&\geq a^2+ \frac{1}{2}(b+c)^2\\
		&=a^2+\frac{1}{2}a^2=\frac{3}{2}a^2,
	\end{align*}	
	which implies that
	\[
	a^2\leq \frac{2}{3}(a^2+b^2+c^2).
	\]	
	
	Then we can apply this claim to derive
	\begin{equation}\label{w12}
		W_{12}^2\leq \frac{2}{3}(W_{12}^2+W_{13}^2+W_{14}^2)
	\end{equation}
	Since
	\begin{align*}
		W_{12}+W_{13}+W_{14}=0.
	\end{align*}
	By use of the traceless the Weyl curvature tensor again, we have
	\begin{align*}
		W_{12}+W_{13}+W_{14}=0,\\
		W_{12}+W_{23}+W_{24}=0,\\
		W_{13}+W_{23}+W_{34}=0,\\
		W_{14}+W_{24}+W_{34}=0,
	\end{align*}
	which yields that
	\[
	W_{34}=W_{12},\,\,\,W_{24}=W_{13},\,\,\,W_{23}=W_{14}.
	\]
	Thus
	\begin{equation}\label{w12'}
		|W|^2=8(W_{12}^2+W_{13}^2+W_{14}^2).
	\end{equation}
	Substitute \eqref{w12'} into \eqref{w12}, it is easy to see
	\[
	W_{12}^2\leq\frac{1}{12} |W|^2.
	\]	
\end{proof}

%
\section{scalar curvature in five dimension }\label{section3}
Let $(M, g, f)$ be a $5$-dimensional complete noncompact gradient shrinking Ricci soliton satisfying \eqref{soliton'} with constant scalar curvature $R=1$.
Throughout this paper we alway denote the eigenvalues of Ricci curvature by
\[
\lambda_1\leq \lambda_2\leq  \lambda_3 \leq  \lambda_4\leq \lambda_5.
\]
In this section, we consider the asymptotic behavior of the sum of the smallest three Ricci eigenvalues $\lambda_1+\lambda_2+\lambda_3$.

\smallskip
First of all, using algebraic relations, it is easy to obtain the following results.
\begin{lem}\label{le123}
Let $(M, g, f)$ be an $5$-dimensional complete noncompact gradient shrinking Ricci soliton satisfying \eqref{soliton'} with constant scalar curvature $R=1$. Then we have
\[
\lambda_1+\lambda_2+ \lambda_3\geq0,\,\,\,\,\,  \lambda_4\leq \frac{1}{2}.
\]		
\end{lem}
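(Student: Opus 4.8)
The plan is to reduce both inequalities to elementary algebra involving only two scalar quantities that the constant scalar curvature hypothesis pins down. Since the Ricci eigenvalues sum to the trace of the Ricci tensor, the first identity is simply $\lambda_1+\lambda_2+\lambda_3+\lambda_4+\lambda_5=R=1$. For the second, observe that $R\equiv 1$ is constant, so $\Delta_f R=0$, and the evolution equation $\Delta_f R=R-2|\mathrm{Ric}|^2$ then forces $|\mathrm{Ric}|^2=\frac{R}{2}=\frac12$, that is $\sum_{i=1}^5\lambda_i^2=\frac12$. These two identities are the only input the argument needs from the soliton structure.

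For the bound $\lambda_4\le\frac12$ I would argue by contradiction. If $\lambda_4>\frac12$, then $\lambda_5\ge\lambda_4>\frac12$, hence $\lambda_4^2+\lambda_5^2>2\cdot\frac14=\frac12$. Since $\lambda_1^2+\lambda_2^2+\lambda_3^2\ge 0$, this would give $\sum_{i=1}^5\lambda_i^2>\frac12$, contradicting the second moment identity.

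For $\lambda_1+\lambda_2+\lambda_3\ge 0$, note that by the first identity this is equivalent to $s:=\lambda_4+\lambda_5\le 1$. To prove the latter I would combine the Cauchy--Schwarz inequality $(\lambda_1+\lambda_2+\lambda_3)^2\le 3(\lambda_1^2+\lambda_2^2+\lambda_3^2)$ with the substitutions $\lambda_1+\lambda_2+\lambda_3=1-s$ and $\lambda_1^2+\lambda_2^2+\lambda_3^2=\frac12-(\lambda_4^2+\lambda_5^2)$, together with the trivial bound $\lambda_4^2+\lambda_5^2\ge\frac{s^2}{2}$. This yields $(1-s)^2\le\frac32-\frac{3s^2}{2}$, i.e. $5s^2-4s-1\le 0$, which factors as $(5s+1)(s-1)\le 0$ and therefore forces $s\le 1$. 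As a byproduct one also has $2\lambda_4\le\lambda_4+\lambda_5=s\le 1$, giving an independent proof that $\lambda_4\le\frac12$.

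The argument is purely algebraic, so I anticipate no serious obstacle; the only delicate point is choosing exactly which inequalities to feed into the Cauchy--Schwarz estimate so that the resulting quadratic in $s$ factors cleanly and produces the sharp threshold $s\le 1$. That this threshold is sharp, and that both assertions of the lemma are best possible, is confirmed by the model $\mathbb{R}^3\times\mathbb{S}^2$, whose Ricci eigenvalues $(0,0,0,\frac12,\frac12)$ saturate both inequalities simultaneously.
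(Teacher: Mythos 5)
Your proposal is correct and rests on exactly the same two facts as the paper's proof, namely $\sum_i\lambda_i=R=1$ and $\sum_i\lambda_i^2=|\mathrm{Ric}|^2=\tfrac12$, combined with elementary inequalities on the eigenvalues. The paper reaches $\lambda_1+\lambda_2+\lambda_3\ge0$ a bit more directly (applying only $(\lambda_4+\lambda_5)^2\le2(\lambda_4^2+\lambda_5^2)$ and noting the resulting left-hand side is nonnegative), whereas you add a Cauchy--Schwarz step on the bottom three eigenvalues and solve a quadratic, but this is a cosmetic difference rather than a different argument.
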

\begin{proof}
Since,
	\[
1=R=\sum_{i=1}^{3}\lambda_i+ \lambda_4+ \lambda_5, \,\,\,
\frac{1}{2}=|Ric|^2=\sum_{i=1}^{3}\lambda_i^2+ \lambda_4^2+ \lambda_5^2,
	\]
from the  mean inequality
\ban
(\lambda_4+ \lambda_5)^2\leq 2( \lambda_4^2+\lambda_5^2)
\ean
it follows that
\ba\label{123}
(\lambda_1+ \lambda_2+\lambda_3)^2+2(\lambda_1^2+ \lambda_2^2+\lambda_3^2)\leq2(\lambda_1+ \lambda_2+\lambda_3),
\ea
which shows
\ban
\lambda_1+\lambda_2+ \lambda_3\geq0\,\, \text{and}\,\,\,  \lambda_4\leq \frac{1}{2}.
\ean
\end{proof}

\smallskip
By Corollary \ref{naber's splitting theorem}, we know the aymptotic limit of $(M, g)$ along the integral curve of $f$ is $\mathbb R\times  N^4$, where $ N^4$ is a four-dimensional shrinking gradient Ricci soliton with scalar curvature $1$, hence is a finite quotient of $\mathbb R^2\times \mathbb S^2$ by Cheng-Zhou \cite{Cheng-Zhou}. So $\lambda_1+\lambda_2+\lambda_3$ tends to zero along each integral curve of $f$. For later application, it is necessary to prove $\lambda_1+\lambda_2+\lambda_3$ tends to zero uniformly.

\begin{theo}\label{asymptotic limit} Let $(N^4, g(t))$ with $t\in (-\infty, 0)$ be an ancient solution to the Ricci flow. If its scalar curvature is $\frac{1}{-t}$, $\kappa$-noncollapsed and has bounded curvature, then it is isometric to a finite quotient of $\mathbb{R}^2\times \mathbb{S}^2$.
\end{theo}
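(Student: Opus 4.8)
The decisive structural feature is that the hypothesis $R(g(t))=\frac{1}{-t}$ is invariant under parabolic rescaling: for $g_a(t):=a^{-1}g(at)$ one has $R(g_a(t))=a\,R(g(at))=a\cdot\frac{1}{-at}=\frac{1}{-t}$. My plan is to exploit this invariance to run Naber's asymptotic analysis and then to match the resulting blow-up and blow-down solitons. As a consistency check, since $R$ is spatially constant we have $\Delta R=0$, so $\partial_t R=\Delta R+2|Ric|^2$ gives $2|Ric|^2=\partial_t(\frac{1}{-t})=\frac{1}{t^2}=R^2$; at $t=-1$ this reads $R=1$, $|Ric|^2=\frac12$, exactly the relations of the $R=1$ shrinker $\mathbb{R}^2\times\mathbb{S}^2$. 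I emphasize that the two moment identities $\sum_i\mu_i=R$ and $\sum_i\mu_i^2=\frac{R^2}{2}$ on the Ricci eigenvalues $\mu_i$ do not force the profile $\{0,0,\frac12,\frac12\}$ pointwise, so a genuine use of the flow is unavoidable and a purely algebraic argument cannot suffice.

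Granting that the bounded-curvature and $\kappa$-noncollapsing hypotheses render $(N^4,g(t))$ a $(C,\kappa)$-controlled Ricci flow, I would apply Theorem \ref{naber's theorem} with $\tau_i^-\to 0$ and $\tau_i^+\to\infty$ to extract a blow-up limit $S^-$ and a blow-down limit $S^+$, each a $(C,\kappa)$-controlled shrinking gradient Ricci soliton normalized at $t=-1$. By the rescaling invariance above every $g_i^\pm$ satisfies $R\equiv\frac{1}{-t}$, and this descends to the smooth limits, so both $S^+$ and $S^-$ are four-dimensional shrinking gradient Ricci solitons of constant scalar curvature $R=1$. By Cheng--Zhou's resolution of Cao's conjecture in dimension four \cite{Cheng-Zhou}, each is rigid; among the rigid four-dimensional shrinkers the value $R=1$ occurs only for $\mathbb{R}^2\times\mathbb{S}^2$ (the remaining rigid models have $R\in\{0,\frac32,2\}$), so $S^\pm=(\mathbb{R}^2\times\mathbb{S}^2)/\Gamma^\pm$ for finite groups $\Gamma^\pm$ acting freely by isometries.

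The heart of the matter is to prove that $S^+$ and $S^-$ are isometric, for then the final clause of Theorem \ref{naber's theorem} forces $(N^4,g(t))$ to be isometric to $S^\pm$, giving the theorem. Since both limits are quotients of the same model, this reduces to matching the deck groups $\Gamma^\pm$, and I would do so through a scale-invariant invariant that is pinned at the two ends of the flow: quotienting $\mathbb{R}^2\times\mathbb{S}^2$ by a group of order $k$ divides both the coefficient of quadratic volume growth and the Gaussian density by $k$, so the entropy of $S^\pm$ determines $|\Gamma^\pm|$. Monotonicity of Perelman's reduced volume yields one inequality between these densities; obtaining the reverse is the delicate step, and I expect to secure it by lifting to the universal cover $\tilde N$ (still $(C,\kappa)$-controlled with $R=\frac{1}{-t}$), whose asymptotic solitons should be the simply connected model $\mathbb{R}^2\times\mathbb{S}^2$, whence $\tilde N\cong\mathbb{R}^2\times\mathbb{S}^2$ by Naber's rigidity and $N=\tilde N/\Gamma$ with $\Gamma$ finite by $\kappa$-noncollapsing. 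I expect the main obstacle to be precisely this comparison $S^+\cong S^-$, equivalently the control of how the fundamental group behaves under the blow-up and blow-down limits, which is where Naber's finer analysis of the asymptotic limit \cite{Naber} must be pushed.
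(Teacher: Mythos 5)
Your overall architecture coincides with the paper's: use the scale invariance of $R=\frac{1}{-t}$, apply Theorem \ref{naber's theorem} at $\tau\to 0$ and $\tau\to\infty$, identify both asymptotic solitons as finite quotients of $\mathbb{R}^2\times\mathbb{S}^2$ via Cheng--Zhou, and invoke the rigidity clause of Naber's theorem once the two limits are known to be isometric. You correctly isolate the crux --- proving $S^+\cong S^-$ --- but you do not close it, and the route you sketch has genuine problems. First, a density or entropy comparison at best pins down the orders $|\Gamma^\pm|$, and two quotients of $\mathbb{R}^2\times\mathbb{S}^2$ by groups of the same order need not be isometric (the groups may act differently), so even a two-sided entropy bound would not finish the argument. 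Second, your proposed source of the reverse inequality is circular: to conclude $\tilde N\cong\mathbb{R}^2\times\mathbb{S}^2$ from Naber's rigidity you must already know that both asymptotic solitons of the universal cover are the simply connected model, i.e.\ that no fundamental group is created or destroyed in the blow-up and blow-down limits --- which is precisely the statement you are trying to prove, transplanted to $\tilde N$.

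The paper closes this step with a soft topological argument rather than an analytic one: by the Cheeger--Gromov convergence, for $\tau_i$ large the parabolic ball $B(p,g(-\tau_i),100\sqrt{\tau_i})$ is diffeomorphic to $(\mathbb{R}^2\times\mathbb{S}^2)/\Gamma_1$, for $\tau_i$ small it is diffeomorphic to $(\mathbb{R}^2\times\mathbb{S}^2)/\Gamma_2$, and since the flow is smooth the diffeomorphism type of $B(p,g(-\tau),100\sqrt{\tau})$ cannot jump as $\tau$ varies; hence $\Gamma_1=\Gamma_2$ and the two limits agree, after which Theorem \ref{naber's theorem} is applied a second time exactly as you intend. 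If you wish to retain your entropy approach you would still need a topological input of this kind to exclude distinct actions of groups of equal order, at which point the entropy comparison is doing no work; I recommend replacing that part of your argument with the constancy of the topology of the parabolic balls along the flow.
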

\Pf For  any $p\in M$, because the flow is type I, $\kappa$-noncollapsed, when $\tau_i \rightarrow 0$ or $\tau_i\rightarrow \infty$, we can apply Theorem \ref{naber's theorem} to derive that $\left( N^4, \frac{1}{\tau_i}g(-\tau_it), p\right) $ converge in Cheeger-Gromov sense to a shrinking gradient Ricci soliton with scalar curvature $1$ at time $t=-1$, has to be a finite quotient of $\mathbb{R}^2\times \mathbb{S}^2$ by the main result in Cheng-Zhou \cite{Cheng-Zhou}. If $\tau_i \rightarrow \infty$, we denote the limit by $(\mathbb{R}^2\times \mathbb{S}^2)/\Gamma_{1}$; If $\tau_i \rightarrow 0$, we denote the limit by $(\mathbb{R}^2\times \mathbb{S}^2)/\Gamma_{2}$. By the convergence process, we see that $B(p, g(-\tau_i), 100\sqrt{\tau_i})$ is diffeomorphic to $(\mathbb{R}^2\times \mathbb{S}^2)/\Gamma_{1}$ when $\tau_i$ is large enough. Since the flow is smooth,  the topology of $B(p, g(-\tau), 100\sqrt{\tau})$ does not change along   $\tau$. This implies that $\Gamma_1=\Gamma_2$. Finally we can apply  Theorem \ref{naber's theorem} again to obtain $(N^4, g(t))$ itself is also a shrinking gradient Ricci soliton, therefore is isometric to a finite quotient of $\mathbb{R}^2\times \mathbb{S}^2$ by \cite{Cheng-Zhou}.
\qed

\begin{theo}\label{uniform tend to zero}Let  $(M^5, g, f)$ be a shrinking gradient Ricci soliton with $R=1$. If it has bounded curvature, then  $\lambda_1+\lambda_2+\lambda_3\rightarrow 0$ as $p\rightarrow\infty$.
\end{theo}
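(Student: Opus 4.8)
The plan is to prove the uniform statement by a standard blow-up/contradiction argument that feeds into the classification in Theorem~\ref{asymptotic limit}. Suppose, for contradiction, that $\lambda_1+\lambda_2+\lambda_3$ does not tend to $0$ uniformly at infinity. Then there exist $\varepsilon_0>0$ and a sequence of points $p_i\to\infty$ (so $f(p_i)\to\infty$ by properness of $f$) with $(\lambda_1+\lambda_2+\lambda_3)(p_i)\geq\varepsilon_0$ for all $i$. The idea is to dilate the soliton at the scale determined by the potential function at $p_i$, extract a limit, and show the limit contradicts the expected product structure, on which $\lambda_1+\lambda_2+\lambda_3$ must vanish.

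Concretely, I would pass to the associated Ricci flow $g(t)=(-t)\phi_t^{*}g$ on $(-\infty,0)$, which is type~I and $\kappa$-noncollapsed since $M$ has bounded curvature. Setting $\tau_i=f(p_i)\to\infty$ and rescaling as in Theorem~\ref{naber's theorem}, the pointed sequence $\bigl(M,\tau_i^{-1}g(-\tau_i t),p_i\bigr)$ subconverges in the Cheeger–Gromov sense to a $(C,\kappa)$-controlled limit. Because the $p_i$ are escaping to infinity along (or near) integral curves of $\nabla f$, Corollary~\ref{naber's splitting theorem} forces the limit to split off a Euclidean line, so it is a product $\mathbb{R}\times N^4$ with $N^4$ a four-dimensional shrinking gradient Ricci soliton of scalar curvature $1$; by Cheng–Zhou~\cite{Cheng-Zhou} this $N^4$ is a finite quotient of $\mathbb{R}^2\times\mathbb{S}^2$. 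Hence the full limit is (a finite quotient of) $\mathbb{R}^3\times\mathbb{S}^2$, on which the three smallest Ricci eigenvalues are all $0$. By smooth convergence of curvature, $(\lambda_1+\lambda_2+\lambda_3)(p_i)\to 0$, contradicting $(\lambda_1+\lambda_2+\lambda_3)(p_i)\geq\varepsilon_0$.

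The main obstacle is ensuring that the blow-up limit genuinely splits off a line \emph{uniformly}, i.e.\ that Corollary~\ref{naber's splitting theorem} applies no matter how the points $p_i$ are chosen, not merely along a single prescribed integral curve. The subtlety is that the $p_i$ may not lie on one integral curve of $\nabla f$, so I would need to control the direction $\nabla f/|\nabla f|$ at $p_i$ and verify that, after rescaling by $\tau_i=f(p_i)$, the unit vector $e_2=\nabla f/|\nabla f|$ converges to the splitting direction of the limit. This is exactly where the estimate $|\nabla f|^2=f$ from \eqref{iso1} and the uniform curvature bound are used: they guarantee that the rescaled flows stay $(C,\kappa)$-controlled with uniform constants and that the Gaussian/line factor is detected in the limit independently of the chosen sequence. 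Once this uniform splitting is established, the classification of $N^4$ and the smooth convergence of the Ricci eigenvalues close the argument.
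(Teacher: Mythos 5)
Your blow-up/contradiction skeleton is the same as the paper's: assume a divergent sequence $q_j$ with $(\lambda_1+\lambda_2+\lambda_3)(q_j)\geq\delta$, pass to a pointed Cheeger--Gromov limit of the associated (Type I, $\kappa$-noncollapsed) Ricci flow, show the limit splits a line, identify the cross-section, and contradict the eigenvalue lower bound. However, there is a genuine gap at the step where you claim the cross-section $N^4$ of the limit is a \emph{shrinking gradient Ricci soliton} to which Cheng--Zhou applies. Corollary~\ref{naber's splitting theorem} only yields a soliton cross-section for points escaping to infinity \emph{along a single integral curve of $\nabla f$}; for an arbitrary divergent sequence $q_j$ it does not apply, and you cannot repair this by rescaling with $\tau_i=f(p_i)$, since Theorem~\ref{naber's theorem} concerns parabolic rescalings at a fixed basepoint. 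What actually survives in the limit is only this: the renormalized potentials $f_j=\bigl(f-f(q_j)\bigr)/|\nabla f(q_j)|$ satisfy $\nabla^2 f_j=\bigl(\tfrac12 g-\mathrm{Ric}\bigr)/|\nabla f(q_j)|\to 0$, so $f_\infty$ is a nonconstant linear function and the limit splits as $\mathbb{R}\times N^4$ --- but the soliton potential degenerates to a linear function and induces \emph{no} soliton structure on $N^4$. All one knows about $N^4$ is that it is a four-dimensional ancient solution with $R(g_{N^4}(t))=\frac{1}{-t}$, Type I and $\kappa$-noncollapsed.

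Closing that gap is precisely the role of Theorem~\ref{asymptotic limit}, which you do not use: it classifies such ancient solutions as finite quotients of $\mathbb{R}^2\times\mathbb{S}^2$, and its proof is not a formality --- it applies Naber's theorem at both time scales $\tau\to 0$ and $\tau\to\infty$, uses a topological argument (stability of the diffeomorphism type of $B(p,g(-\tau),100\sqrt{\tau})$ along the flow) to show the two asymptotic quotient groups coincide, and only then invokes the rigidity part of Theorem~\ref{naber's theorem} to conclude $N^4$ is itself a soliton. Your third paragraph flags the right difficulty (arbitrary sequences versus a single integral curve) but the proposed resolution --- that $|\nabla f|^2=f$ and $(C,\kappa)$-control "detect the line factor independently of the sequence" --- addresses only the splitting of the $\mathbb{R}$ factor, not the soliton structure of the cross-section, which is the step that actually requires work.
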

\Pf
Suppose on the contrary, then there exists a sequence of $q_j$ divergent to infinity with $(\lambda_1+\lambda_2+\lambda_3)(q_j)\geq \delta$ for some $\delta>0$.

Since $R=1$, the associated Ricci flow is  $\kappa$-noncollapsed by \cite{Li-Wang1} and of Tpye I.
As \cite{Naber}, define $f_j(x)=\frac{f(x)-f(q_j)}{|\nabla f(q_j)|}$.    Then the corresponding Ricci flow $(M, g(t), q_j)$ converge in Cheeger-Gromov sense to $(M_\infty, g_\infty(t), q_\infty)$ with $(\lambda_1+\lambda_2+\lambda_3)(q_\infty)\geq \delta$, and $(M_\infty, g_\infty(t), q_\infty)$ is also $\kappa$-noncollapsed and of Type I.  Moreover  $|\nabla f(q_j)|=1$,
\ban
\nabla^2 f_j=\frac{\nabla^2 f}{|\nabla f(q_j)|}=\frac{\frac{1}{2}g-Ric}{|\nabla f(q_j)|}
\ean
tends to zero at infinity,  $f_j$ converges to a smooth function with $|\nabla f|(q_\infty) =
1$ and $\nabla^2 f_\infty =0$. Hence $(M_\infty, g_\infty(t)) =\mathbb{R}\times (N^4, g_{N_4}(t))$, where $(N^4, g_{N_4}(t))$ is a four-dimensional ancient solution to the Ricci flow with  $R(g_{N_4}(t))=\frac{1}{-t}$, and is also $\kappa$-noncollapsed and of Type I,  then $(N^4, g_{N_4}(t))$ has to be a finite quotient of $\mathbb{R}^2\times \mathbb{S}^2$  by Theorem \ref{asymptotic limit}. This contradicts with $(\lambda_1+\lambda_2+\lambda_3)(q_\infty)\geq \delta$.
\qed

\vspace{0.3cm}

\section{intrinsic volume of the focal variety of potential function}\label{section4}
It's known that on a five-dimensional simply connected shrinking Ricci soliton with $R=1$, the focal variety of potential function $M_{-}=f^{-1}(0)$ is a two-dimensional simply connected closed minimal submanifold. Actually, $M_{-}$ is the deformation contraction of $M^5$,  hence has to be diffeomorphic to $\mathbb{S}^2$. Because $f(x)=\frac{1}{4}d(x, f^{-1}(0))^2$, $f=|\nabla f|^2$ and the exponential map is a local diffeomorphism, it is easy to see that $f^{-1}(t)$ is diffeomorphic to $\mathbb{S}^2\times \mathbb{S}^2$ when $t$ is small. Hence all the level set of $f$ are diffeomorphic to $\mathbb{S}^2\times \mathbb{S}^2$ since $f$ has no critical point away from $M_{-}$.

\begin{prop}\label{volcritical}
	Let $(M^5, g, f)$ be a shrinking gradient Ricci soliton with $R=1$. Then
	\[
	|M_{-}|\geq 8\pi,
	\]
	where $|M_{-}|=Vol(f^{-1}(0))$ is the two-dimensional intrinsic volume of $M_{-}$.
\end{prop}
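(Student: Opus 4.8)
The plan is to read off the bound from the intrinsic geometry of $M_-$ via Gauss--Bonnet. By the discussion preceding the statement, $M_-$ is a closed surface diffeomorphic to $\mathbb S^2$, so $\int_{M_-}K_{M_-}\,dA=2\pi\chi(M_-)=4\pi$, where $K_{M_-}$ is the Gaussian curvature of the induced metric. Since $\frac{1}{2}\cdot 8\pi=4\pi$, the inequality $|M_-|\ge 8\pi$ is equivalent, through Gauss--Bonnet, to the pointwise bound $K_{M_-}\le\frac{1}{2}$; the model $\mathbb R^3\times\mathbb S^2$, whose focal set is a round sphere of Gaussian curvature $\frac{1}{2}$ and area $8\pi$, shows this must be sharp and attained exactly in the rigid case. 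The whole matter therefore reduces to a pointwise curvature comparison on $M_-$.

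To set this up, fix $p\in M_-$. By Theorem \ref{levelset}(iii) the Hessian $\Hess f$ has eigenvalue $0$ on $T_pM_-$ and $\frac{1}{2}$ on $\nu_pM_-$, so by \eqref{soliton'} the ambient Ricci tensor is $\frac{1}{2}$ on $T_pM_-$ and $0$ on $\nu_pM_-$. Choose an orthonormal Ricci eigenframe with $e_1,e_2\in T_pM_-$ and $e_3,e_4,e_5\in\nu_pM_-$, and let $\mathrm{II}$ be the second fundamental form of $M_-$. Using minimality, $\mathrm{II}(e_1,e_1)=-\mathrm{II}(e_2,e_2)$, the Gauss equation becomes
\[
K_{M_-}=K^M(e_1,e_2)-|\mathrm{II}(e_1,e_1)|^2-|\mathrm{II}(e_1,e_2)|^2=K^M(e_1,e_2)-\frac{1}{2}|\mathrm{II}|^2,
\]
so it remains to control the ambient sectional curvature $K^M(e_1,e_2)$ of the tangent plane together with the second fundamental form.

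For the ambient term I would exploit the constant-scalar-curvature structure. Because $R\equiv 1$, the identity $\Delta_f R=R-2|\Ric|^2$ forces $|\Ric|^2\equiv\frac{1}{2}$, so the left-hand side of \eqref{elliptic equation} vanishes; evaluated at $p$, only the product $\lambda_1\lambda_2=\frac{1}{4}$ of the two nonzero eigenvalues survives in the curvature term, yielding $K^M(e_1,e_2)$ explicitly in terms of $|\nabla\Ric|^2(p)$. In parallel, differentiating the relation $\Ric=\frac{1}{2}P_{TM_-}$ along $M_-$, and using that $\nabla\Ric$ is totally symmetric wherever $\nabla f=0$ (a consequence of $\nabla_iR_{jk}-\nabla_jR_{ik}=-R_{ijkl}f_l$), one expresses the tangent--tangent--normal components of $\nabla\Ric$ at $p$ through $\mathrm{II}$. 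Substituting both computations into the Gauss equation produces a pointwise formula for $K_{M_-}$ in terms of $\mathrm{II}$ and the purely normal derivatives of $\Ric$, which I would then estimate and integrate over $M_-\cong\mathbb S^2$, finishing with Gauss--Bonnet.

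The main obstacle is precisely the sign and size of this comparison. The elliptic identity does not hand over $K^M(e_1,e_2)\le\frac{1}{2}$: it tends to raise the ambient tangential curvature above $\frac{1}{2}$ by the curvature-derivative term $|\nabla\Ric|^2$, so the desired bound cannot be read off from the ambient curvature alone. The delicate point is to show that the Gauss-equation defect $-\frac{1}{2}|\mathrm{II}|^2$, combined with the exact relation between $|\nabla\Ric|$, $|\mathrm{II}|$ and the normal Ricci derivatives dictated by the focal structure, conspires to give the sharp inequality with the correct constant; this is the same flavour of tight algebraic bookkeeping as the Weyl bound of Proposition \ref{12}. Establishing this balance, rather than any soft topological input, is where the real work lies.
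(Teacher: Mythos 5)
Your framework is the same as the paper's (Gauss--Bonnet on $M_-\cong\mathbb S^2$ plus the Gauss equation plus the Bochner-type identity for $|\Ric|^2$), and you have correctly located the difficulty: on $M_-$ the identity \eqref{elliptic equation} with $|\Ric|^2\equiv\frac12$ gives the ambient tangential curvature as $\frac12+|\nabla \Ric|^2$, which goes the wrong way. But the proposal stops exactly there: the step you defer as ``where the real work lies'' --- showing that the Gauss defect $-\frac12|\mathrm{II}|^2$ absorbs the excess $|\nabla\Ric|^2$ --- is the entire content of the proposition, and no mechanism for proving it is given. There is a genuine gap, not merely an unexpanded routine verification.

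The paper closes this gap by a different and more direct route: it proves that $\nabla\Ric\equiv 0$ on $M_-$, so the excess term simply vanishes and $K(e_1,e_2)=\frac12$ exactly; minimality then makes the Gauss correction nonpositive and Gauss--Bonnet gives $4\pi\le\frac12|M_-|$. The vanishing of $\nabla\Ric$ is a component-by-component computation at a point of $M_-$ that uses three inputs you have not brought into play: (i) total symmetry of $\nabla\Ric$ where $\nabla f=0$, from $\nabla_iR_{jk}-\nabla_jR_{ik}=-R_{ijkl}\nabla_l f$; (ii) the global bound $\lambda_4\le\frac12$ of Lemma \ref{le123}, which forces the top two eigenvalues to attain their maximum on $M_-$ and hence kills their first derivatives there; and (iii) the vanishing of the three normal eigenvalues on $M_-$, which kills all the connection (eigenvector-derivative) terms. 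Incidentally, the tangent--tangent--normal components of $\nabla\Ric$ at a point of $M_-$ equal $\frac12\langle \mathrm{II}(e_\alpha,e_\beta),e_i\rangle$, so $\nabla\Ric=0$ in fact forces $\mathrm{II}=0$; the ``balance'' you hope for holds only in the degenerate form $|\nabla\Ric|^2=|\mathrm{II}|^2=0$, and there is no softer inequality between the two quantities to exploit. Without the vanishing argument your pointwise comparison $K_{M_-}\le\frac12$ remains unproved.
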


\begin{proof} First, we will prove that $\nabla Ric=0$ on $M_{-}$. In fact, for any point $x\in M_{-}$, suppose $\{e_i\}_{i=1}^5$ are orthonormal  eigenvectors of Ricci curvature corresponding to eigenvalues $\{\lambda_i\}_{i=1}^5$ at $x$ such that $\{e_1, e_2, e_3\}$ are perpendicular to $f^{-1}(0)$ and $\{e_4, e_5\}$ are tangential to $M_{-}$.   Extend $\{e_i\}_{i=1}^5$ to a neighborhood of $x$, which we denote by $\{E_i\}_{i=1}^5$, such that $E_i(x)=e_i$ and
	$Ric(E_i, E_j)=\lambda_i \delta_{ij}$. Actually we can choose $\{E_i\}_{i=1}^5$ such that $\{E_i\}_{i=1}^5$ are Lipschitz continuous.
	
	Recall from \cite{FR16}, we know that $\lambda_1=\lambda_2=\lambda_3=0$ and $\lambda_4=\lambda_5=\frac{1}{2}$ on $M_{-}$. To prove that $\nabla Ric=0$ at $x$, we calculate $\nabla_k R_{ij}$ at $x$ in sequence. Notice that
	\ban
	\nabla_i R_{jk}-\nabla_j R_{ik}=-R_{ijkl}\nabla_l f=0
	\ean
	since $\nabla f=0$ at $x$. Firstly, at $x$, we have
	
	\ban
	\nabla_1 R_{44}&=&E_1 Ric(E_4, E_4)-2Ric(\nabla_{E_1}E_4, E_4)\\
	&=&E_1 \lambda_4 -2\langle \nabla_{E_1}E_4 , \lambda_4 E_4\rangle\\
	&=&0-\frac{1}{2}\langle \nabla_{E_1}E_4 , E_4\rangle\\
	&=&-\frac{1}{2}E_1\langle E_4, E_4\rangle=0,
	\ean
	where in the third equality we used that $\lambda_4(x)=\frac{1}{2}$ and $\lambda_4\leq \frac{1}{2}$ in Lemma \ref{le123}. Similarly $\nabla_2R_{44}=\nabla_3R_{44}=\nabla_4R_{44}=\nabla_5R_{44}=0$ at $x$.
	Notice that apriorily it is unknown whether $\lambda_4$ is smooth, we can't take derivative directly. It can be understood as follows:  Since $\lambda_4$ and $\{E_i\}$ are all Lipschitz, hence differentiable almost everywhere, we can do the calculation at the points where all the quantities are differentiable, then take the limit as the points approach $x$, the same conclusion holds.

	Secondly, at $x$, we have
	\ban
	\nabla_1 R_{45}&=&E_1 Ric(E_4, E_5)-Ric(\nabla_{E_1}E_4, E_5)-Ric(E_4, \nabla_{E_1}E_5)\\
	&=&0-\langle \nabla_{E_1}E_4,  \lambda_5 E_5\rangle-\langle \lambda_4 E_4, \nabla_{E_1} E_5\rangle\\
	&=&-\frac{1}{2}\langle \nabla_{E_1}E_4,  E_5\rangle-\frac{1}{2}\langle E_4, \nabla_{E_1} E_5\rangle\\
	&=&-\frac{1}{2}E_1\langle E_4, E_5\rangle=0.
	\ean
	
	Thirdly, at $x$, we have
	\ban
	\nabla_1 R_{23}&=&E_1 Ric(E_2, E_3)-Ric(\nabla_{E_1}E_2, E_3)-Ric(E_2, \nabla_{E_1}E_3)\\
	&=&0-\langle \nabla_{E_1}E_2,  \lambda_3 E_3\rangle-\langle \lambda_2 E_2, \nabla_{E_1} E_3\rangle\\
	&=&0-0-0=0
	\ean
	and
	\ban
	\nabla_1 R_{24}&=&\nabla_4 R_{12}\\
	&=&E_4 Ric(E_1. E_2)-Ric(\nabla_{E_4}E_1, E_2)-Ric(E_1, \nabla_{E_4}E_2)\\
	&=&0-\langle \nabla_{E_4}E_1, \lambda_2 E_2\rangle-\langle \lambda_1 E_1,  \nabla_{E_4}E_2\rangle\\
	&=&0-0-0=0
	\ean
	because $\lambda_i(x)=0$ if $1\leq i\leq 3$.
	
	At last, we have at $x$
	\ban
	\nabla_1 R_{11}&=&E_1 Ric(E_1, E_1)-2Ric(\nabla_{E_1}E_1, E_1)\\
	&=&E_1 Ric(E_1, E_1)-2\langle \nabla_{E_1}E_1,  \lambda_1 E_1\rangle\\
	&=&0-0=0.
	\ean
	 Similarly $\nabla_i R_{jj}=0$ for $1\leq i\leq 5, 1\leq j\leq 3$.
	Hence,
	\ban
	\nabla_1 R_{55}=\nabla_1 R-\nabla_1 R_{11}-\nabla_1 R_{22}-\nabla_1 R_{33}-\nabla_1 R_{44}=0.
	\ean
	\ban
	\nabla_5 R_{55}=\nabla_5 R-\nabla_5 R_{11}-\nabla_5 R_{22}-\nabla_5 R_{33}-\nabla_5 R_{44}=0.
	\ean
	Above all, we have proved that $\nabla Ric(x)=0$. Therefore, $\nabla Ric=0$ on $M_{-}$ due to the arbitrariness of $x$.
	
	\smallskip
	Next, recall the equation
	\[
	\frac{1}{2}\Delta_f |Ric|^2=|Ric|^2+|\nabla Ric|^2-2 K_{ij}\lambda_{i}\lambda_j
	\]
	and $|Ric|^2=\frac{1}{2}$. Since $\lambda_1=\lambda_2=\lambda_3=0$, $\lambda_4=\lambda_5=\frac{1}{2}$ and  $\nabla Ric=0$ on $M_{-}$, we have
	\[
	K_{45}=\frac{1}{2}
	\]
	on $M_{-}$.

	Now we can apply the Gauss-Codazzi equation to obtain the intrinsic curvature $K_{45}^{M_{-}}$ of $M_{-}$ as follows.
	\ban
	K_{45}^{M_{-}}&=&K_{45}+\langle II(e_4, e_4), II(e_5, e_5)\rangle-\langle II(e_4, e_5), II(e_4, e_5)\rangle\\
	&\leq& K_{45}=\frac{1}{2},
	\ean
	where we used that $II(e_4, e_4)+II(e_5, e_5)=0$ since $M_{-}$ is a minimal surface by Theorem \ref{levelset}.
	
	Finally by the two-dimensional Gauss-Bonnet formula on the minimal surface $M_{-}$,
	\ban
	\int_{M_{-}}K_{45}^{M_{-}} d\sigma_{M_{-}} =2\pi \chi(M_{-})=4\pi,
	\ean
	since $M_{-}$ is diffeomorphic to $\mathbb{S}^2$ and $\chi(M_{-})=2$.
	Therefore, it follows immediately that
	\[
	|M_{-}|\geq 8\pi.
	\]
\end{proof}

	Together with $\text{Vol}(\Sigma(s))=16\pi s|M_{-}|$ and Proposition \ref{volcritical}, we immediately have the following result.
	
	\begin{coro}\label{volsigma}
		Let $(M^5, g, f)$ be a shrinking gradient Ricci soliton with $R=1$, then the intrinsic volume of the level set $\Sigma(s)$
		\[
		\text{Vol}(\Sigma(s))\geq 128\pi^2s.
		\]
	\end{coro}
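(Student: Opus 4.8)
The plan is to read off the result directly from the exact level-set volume formula in Theorem \ref{levelset}(iv) together with the lower bound on the intrinsic volume of the focal variety established in Proposition \ref{volcritical}. Theorem \ref{levelset}(iv) (recalled in the line preceding the corollary) gives the closed-form expression
\[
\text{Vol}(\Sigma(s)) = 16\pi s\, |M_{-}|,
\]
valid for every $s>0$, where $|M_{-}|$ is the two-dimensional intrinsic volume of the focal variety $M_{-}=f^{-1}(0)$. The only remaining input is a lower bound for the single geometric quantity $|M_{-}|$, and this is precisely what Proposition \ref{volcritical} supplies: $|M_{-}|\geq 8\pi$.

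Thus the proof reduces to one substitution. Inserting $|M_{-}|\geq 8\pi$ into the displayed identity and using $s>0$ yields
\[
\text{Vol}(\Sigma(s)) = 16\pi s\, |M_{-}| \geq 16\pi s\cdot 8\pi = 128\pi^2 s,
\]
which is exactly the asserted inequality. No further analysis of the level sets themselves is required, since the dependence of $\text{Vol}(\Sigma(s))$ on $s$ is already explicit and linear; one simply propagates the constant lower bound $8\pi$ through the formula.

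In this sense the corollary carries no genuine obstacle of its own; all of the substantive work has been front-loaded into Proposition \ref{volcritical}. The step I would regard as the true crux of this circle of ideas is the estimate $|M_{-}|\geq 8\pi$, which in turn rests on three things already carried out above: first proving $\nabla \Ric = 0$ along $M_{-}$ (so that the ambient sectional curvature $K_{45}=\frac{1}{2}$ can be extracted from the Bochner-type identity for $|\Ric|^2$ with $\lambda_1=\lambda_2=\lambda_3=0$ and $\lambda_4=\lambda_5=\frac12$); then using minimality of $M_{-}$ together with the Gauss equation to obtain the intrinsic bound $K_{45}^{M_{-}}\leq \frac12$; and finally invoking the two-dimensional Gauss-Bonnet formula with $\chi(M_{-})=2$. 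Once Proposition \ref{volcritical} is in hand, the corollary follows immediately by the substitution above.
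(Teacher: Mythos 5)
Your proof is correct and is exactly the paper's argument: the paper derives the corollary by substituting the bound $|M_{-}|\geq 8\pi$ from Proposition \ref{volcritical} into the identity $\mathrm{Vol}(\Sigma(s))=16\pi s|M_{-}|$ of Theorem \ref{levelset}(iv). The arithmetic $16\pi\cdot 8\pi=128\pi^2$ checks out, and your observation that all substantive work lives in Proposition \ref{volcritical} matches the paper's presentation of this as an immediate consequence.
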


\section{proof of main theorem}\label{section5}
In this section, we will prove Theorem \ref{maintheorem}. Recall Theorem \ref{maintheorem} as follows.

\begin{theo}\label{theorem 4.1} Let $(M, g, f)$ be a 5-dimensional complete noncompact shrinking gradient Ricci soliton with constant scalar curvature $1$. If it has bounded curvature, then it must be isometric to a finite quotient of $\mathbb {R}^3\times\mathbb{S}^2$.
\end{theo}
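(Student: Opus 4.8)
The plan is to reduce the theorem to showing that $\mu:=\lambda_1+\lambda_2+\lambda_3$ vanishes outside a compact set, and then to propagate this to all of $M$ by analyticity. Since $R\equiv 1$ we have $\Ric(\nabla f)=\frac{1}{2}\nabla R=0$, so $\nabla f$ is a null Ricci eigenvector; as in the excerpt I normalize $\lambda_2=0$ with $e_2=\nabla f/|\nabla f|$ on $M\setminus f^{-1}(0)$. By Theorem \ref{uniform tend to zero} we already know $\mu\to 0$ at infinity, so $\mu\geq 0$ and $\mu$ is small far out. The usefulness of the reduction is the following: once $\mu\equiv 0$ on an exterior region, the algebraic inequality \eqref{123} forces $2(\lambda_1^2+\lambda_2^2+\lambda_3^2)\leq 0$, hence $\lambda_1=\lambda_2=\lambda_3=0$; combined with $R=1$ and $|\Ric|^2=\frac{1}{2}$ this yields $\lambda_4=\lambda_5=\frac{1}{2}$, i.e. $\Ric^2=\frac{1}{2}\Ric$ there.

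To establish $\mu\equiv 0$ on $\{f\geq s_0\}$ I would exploit that $|\Ric|^2\equiv\frac{1}{2}$ is constant, so $\Delta_f|\Ric|^2=0$ and the weighted Bochner formula used in Proposition \ref{volcritical} collapses to the pointwise identity
\[
|\nabla\Ric|^2=2\sum_{i,j}K_{ij}\lambda_i\lambda_j-\tfrac{1}{2}.
\]
The entire difficulty lies in the curvature sum on the right, which in the model $\mathbb{R}^3\times\mathbb{S}^2$ equals $\frac{1}{2}$ (there $|\nabla\Ric|=0$ and $K_{45}\lambda_4\lambda_5\to\frac{1}{8}$). With $\lambda_2=0$ the surviving terms pair the small eigenvalues $\lambda_1,\lambda_3$ with the large ones, and the genuinely dangerous contribution involves $K_{13}$, as flagged in the introduction.

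Here I would use the isoparametric geometry of $f$. On a regular level set $\Sigma_s=f^{-1}(s)$ the second fundamental form is $II=(\frac{1}{2}g-\Ric)/|\nabla f|$, so the Gauss equation gives $K_{ij}=K^\Sigma_{ij}-\frac{(\frac{1}{2}-\lambda_i)(\frac{1}{2}-\lambda_j)}{f}$ for $i,j\neq 2$, the correction decaying like $1/f$. Decomposing the intrinsic curvature of the four-manifold $\Sigma_s$ as $K^\Sigma_{ij}=W^\Sigma_{ij}+\frac{1}{2}(\Ric^\Sigma_{ii}+\Ric^\Sigma_{jj})-\frac{R^\Sigma}{6}$ isolates $W^\Sigma_{13}$ as the crux. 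To control it I would bound $|W^\Sigma_{13}|$ sharply by $|W^\Sigma|$ through an algebraic inequality in the spirit of Proposition \ref{12}, and then control $\int_{\Sigma_s}|W^\Sigma|^2$ by the four-dimensional Gauss-Bonnet-Chern formula: every level set is diffeomorphic to $\mathbb{S}^2\times\mathbb{S}^2$, so $\chi(\Sigma_s)=4$ and $\int_{\Sigma_s}|W^\Sigma|^2$ is pinned down by $32\pi^2$ together with controlled $\Ric^\Sigma$- and $R^\Sigma$-integrals. Because $\int_{\Sigma_s}|W^\Sigma|^2$ stays bounded while $\mathrm{Vol}(\Sigma(s))\geq 128\pi^2 s\to\infty$ by Corollary \ref{volsigma}, the Weyl curvature is diluted and contributes negligibly on average; feeding this, together with the decay of $\mu$, into the identity integrated over $\{f\geq s_0\}$, I expect to obtain $\int|\nabla\Ric|^2=0$ there, whence $\mu\equiv 0$ outside a compact set. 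This balancing of the fixed topological Weyl energy against the growing level-set volume is the main obstacle.

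Finally, $\Ric^2-\frac{1}{2}\Ric$ is a polynomial expression in the Ricci tensor, hence a real-analytic tensor on the (real-analytic) soliton; it vanishes on the open exterior region, so it vanishes on all of the connected manifold $M$. Thus every Ricci eigenvalue lies in $\{0,\frac{1}{2}\}$ everywhere, and $R=1$ with $|\Ric|^2=\frac{1}{2}$ forces exactly the multiplicities $(0,0,0,\frac{1}{2},\frac{1}{2})$, so $\Ric$ has constant rank two. By the constant-rank rigidity criterion of Fern\'andez-L\'opez and Garc\'ia-R\'io \cite{FR16} (equivalently, radial flatness plus constant scalar curvature via Petersen-Wylie \cite{Petersen-Wylie2}) the soliton is rigid, and the De Rham splitting theorem identifies it with a finite quotient of $\mathbb{R}^3\times N^2$, where $N^2$ is a two-dimensional Einstein manifold with $\Ric=\frac{1}{2}g$, i.e. a round $\mathbb{S}^2$. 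This gives the desired conclusion, the only delicate ingredient being the sharp control of $W^\Sigma_{13}$ through Gauss-Bonnet-Chern described above.
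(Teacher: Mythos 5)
Your skeleton matches the paper's: reduce to $\lambda_1+\lambda_2+\lambda_3\equiv 0$ outside a compact set, use the Bochner identity $|\nabla \mathrm{Ric}|^2=2\sum K_{ij}\lambda_i\lambda_j-\tfrac12$, isolate $K_{13}$ via the Gauss equation and the Weyl decomposition on the level sets, control $W^{\Sigma}_{13}$ through the algebraic bound $|W^{\Sigma}_{13}|^2\le\tfrac1{12}|W^{\Sigma}|^2$ and Gauss--Bonnet--Chern with $\chi(\Sigma_s)=4$, and finish by analyticity (your variant, propagating $\mathrm{Ric}^2-\tfrac12\mathrm{Ric}=0$ and invoking the constant-rank criterion of Fern\'andez-L\'opez--Garc\'ia-R\'io, is a legitimate alternative to the paper's $\nabla\mathrm{Ric}=0$ plus De Rham). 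However, there are two genuine gaps in the middle. First, your quantitative mechanism for the Weyl term is wrong: $\int_{\Sigma_s}|W^{\Sigma}|^2$ does \emph{not} stay bounded --- on the model $\mathbb{R}^3\times\mathbb{S}^2$ the level sets converge to products of two surfaces of different curvatures, so $|W^{\Sigma}|^2$ is pointwise of order $\tfrac13(R^{\Sigma})^2\approx\tfrac13$ and the integral grows like $\mathrm{Vol}(\Sigma_s)\sim s$. Consequently the Weyl contribution is not ``diluted to negligibility'': Cauchy--Schwarz gives $\int_{\Sigma_s}W^{\Sigma}_{13}\lesssim\tfrac16\int_{\Sigma_s}(1+\tfrac1{2f})$, which is of \emph{leading} order and must cancel \emph{exactly} against the $-\tfrac16 R^{\Sigma}$ term in the decomposition of $K_{13}$. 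This is precisely where the sharp constant $\tfrac1{12}$ and the sharp volume bound $|M_-|\ge 8\pi$ enter: the Euler-characteristic term $32\pi^2\chi(\Sigma_s)=128\pi^2$ has the unfavorable sign in the Gauss--Bonnet--Chern identity and can only be absorbed because $\int_{\Sigma_s}f^{-1}\,d\sigma=16\pi|M_-|\ge 128\pi^2$; with any slack in either constant the cancellation fails and no conclusion follows.

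Second, even granting the sharp Weyl estimate, the resulting level-set inequality is not of the form ``$\int_{\Sigma_s}|\nabla\mathrm{Ric}|^2\le -c\int_{\Sigma_s}(\lambda_1+\lambda_2+\lambda_3)$'': the Gauss-equation expression for $K_{12}+K_{23}$ produces the radial term $\tfrac1f\int_{\Sigma_s}\langle\nabla(\lambda_1+\lambda_2+\lambda_3),\nabla f\rangle$, which has no a priori sign and is not controlled by the mere decay of $\lambda_1+\lambda_2+\lambda_3$ at infinity. The paper needs a separate monotonicity argument (its Proposition \ref{other direction}): the measure $\tfrac1s\,d\sigma_{\Sigma(s)}$ is invariant under the flow of $\nabla f/|\nabla f|^2$, so $I(s)=\tfrac1s\int_{\Sigma(s)}(\lambda_1+\lambda_2+\lambda_3)$ satisfies $I'(s)=\tfrac1{s^2}\int_{\Sigma(s)}\langle\nabla(\lambda_1+\lambda_2+\lambda_3),\nabla f\rangle$, and since $I\ge 0$ tends to zero, one forces $I'\le 0$ for a.e.\ large $s$ (otherwise the combined inequality would make $\int_{\Sigma_s}|\nabla\mathrm{Ric}|^2$ negative). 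Your plan of ``integrating the identity over $\{f\ge s_0\}$'' skips this step entirely, and without it the argument does not close.
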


First of all, \eqref{second bianchi} impiles $Ric(\nabla f,\cdot)=0$ since the scalar curvature is constant. That is $0$ is  Ricci-eigenvalue corresponding to the Ricci-eigenvector $\nabla f$ if $\nabla f\neq0$. Without loss of generality, hence we assume that
$\lambda_2=0$ and choose $e_2=\frac{\nabla f}{|\nabla f|}$ since $\lambda_1+\lambda_2+\lambda_3\geq0$ from Lemma \ref{le123}. Then extend $e_2$ to an orthonormal basis $\{e_1, e_3, e_4,e_5\}$ such that $\{e_i\}_{i=1}^5$ are the eigenvectors of $Ric(x)$ corresponding to eigenvalues $\{\lambda_1, \lambda_2, \lambda_3, \lambda_4, \lambda_5\}$. Recall that the intrinsic curvature tensor $R^{\Sigma}_{\alpha \beta \gamma \eta }$ and the extrinsic curvature tensor $R_{\alpha \beta \gamma \eta }$ of ${\Sigma}$ where $\{\alpha,\beta,\gamma,\eta\}\in\{1,3,4,5\}$, are related by the Gauss equation:
\ban
R^{\Sigma}_{\alpha \beta \gamma \eta }=R_{\alpha \beta \gamma \eta }+h_{\alpha \gamma}h_{\beta \eta}-h_{\alpha \eta }h_{\beta \gamma},
\ean
where $h_{\alpha \beta }$ denotes the components of the second fundamental form $A$ of ${\Sigma}$. Moreover,

\smallskip
\textbf{Claim}
\ba\label{rsigma}
R^{\Sigma}=1+\frac{1}{2f};
\ea
\ba\label{ricsigma}
Ric^{\Sigma}=Ric-\frac{\nabla_{\nabla f} Ric}{f}+\frac{\frac{1}{2}g-Ric}{2f};
\ea
\ba\label{ricsigma'}
|Ric^{\Sigma}|^2=|Ric|^2+\frac{|\nabla_{\nabla f} Ric|^2}{f^2}+\sum_{ i=2}^5\frac{(\frac{1}{2}-\lambda_i)^2}{4f^2};
\ea	
\begin{equation}\label{k1a}
	K_{2\alpha}=\frac{\nabla_{\nabla f}R_{\alpha\alpha} +\lambda_\alpha(\frac{1}{2}-\lambda_\alpha)}{f};
\end{equation}	
\begin{equation}\label{kab}
	\begin{aligned}
K_{\alpha \beta }=&{\frac{1}{2}}(\lambda_{\alpha}+\lambda_{\beta})-{\frac{1}{2}}(K_{2\alpha }+K_{2\beta  })-\frac{1}{6}(1+\frac{1}{2f})+W^{\Sigma}_{\alpha \beta }\\
&+{\frac{1}{2f}}[({\frac{1}{2}}-\lambda_{\alpha })+({\frac{1}{2}}-\lambda_{\beta})](\lambda_{\alpha }+\lambda_{\beta}),
\end{aligned}
\end{equation}
for $\alpha=1,  3 ,4, 5,$ where $W^{\Sigma}_{\alpha \beta}=W^{\Sigma}_{\alpha \beta\alpha \beta} $ denotes the components of the Weyl curvature of ${\Sigma}$.	\\

In fact, it follows from the Gauss equation that
\ban
R^{\Sigma}_{\alpha  \beta }=R_{\alpha  \beta }-R_{2\alpha 2\beta }+Hh_{\alpha  \beta }-h_{\alpha\gamma }h_{\gamma\beta}
\ean
and the scalar curvature $R^{\Sigma}$ of ${\Sigma}$ satisfies
\ban
R^{\Sigma}=R-2R_{22}+H^2-|A|^2.
\ean
Since $R=1$, $Ric(\nabla f, \cdot)=0$, $R_{2i}=0$, $i=1,...,5$, then
\ban
R^{\Sigma}=R+H^2-|A|^2.
\ean
Noting
\ban
h_{\alpha \beta }=\frac{f_{\alpha \beta }}{|\nabla f|}=\frac{\frac{1}{2}-\lambda_\alpha}{\sqrt{f}}\delta_{\alpha \beta},
\ean
then the mean curvature satisfies
\[
H=\frac{\frac{4}{2}-\sum\lambda_\alpha}{\sqrt{f}}=\frac{1}{\sqrt{f}}
\]
and
\begin{align*}
	|A|^2=&\frac{1}{f}\sum(\frac{1}{2}-\lambda_\alpha)^2=\frac{1}{f}(1-\sum\lambda_\alpha+\sum \lambda_\alpha^2)\\
	=&\frac{1}{f}(1-1+\frac{1}{2})=\frac{1}{2f}.
\end{align*}
Hence, $R^{\Sigma}=1+\frac{1}{2f}$.

From the Ricci identity, we have
\begin{equation*}
	\begin{aligned}
		&-R(\nabla f, e_\alpha, \nabla f, e_\beta) \\
		=&-\left( \nabla_\beta f_{\alpha k} - \nabla_kf_{\alpha \beta}\right) f_{k}\\
		=&\left( \nabla_\beta R_{\alpha k} - \nabla_kR_{\alpha \beta}\right) f_{k}\\
		=&-  \nabla_{\nabla f} R_{\alpha \beta}+\nabla_{\beta}(R_{\alpha k} f_k)- R_{\alpha k}f_{ k\beta}\\
		=&-  \nabla_{\nabla f} R_{\alpha \beta}- R_{\alpha k}\left( \frac{1}{2}g_{ k\beta}-R_{ k\beta} \right) \\
		=&- \nabla_{\nabla f} R_{\alpha \beta}-\left( \frac{1}{2}R_{\alpha \beta}-\sum_{k=1}^5R_{\alpha k}R_{k\beta}\right),
	\end{aligned}
\end{equation*}
where \eqref{second bianchi} was used in the third equality. Therefore, we see
\begin{equation}\label{R1a1b}
	R(e_2, e_\alpha, e_2, e_\beta)=\frac{ \nabla_{\nabla f} R_{\alpha \beta}+\left( \frac{1}{2}R_{\alpha \beta}-\sum_{k=1}^5 R_{\alpha k}R_{k\beta}\right) }{f}
\end{equation}
due to $|\nabla f|^2=f$. \eqref{k1a} holds by setting $\beta=\alpha$ in \eqref{R1a1b}.
From the Gauss equation and \eqref{R1a1b}, we see
\begin{equation*}
	\begin{aligned}
		R^{\Sigma}_{\alpha  \beta }=&R_{\alpha  \beta }-\frac{\nabla f\cdot \nabla R_{\alpha \beta}+\left( \frac{1}{2}R_{\alpha \beta}-R_{\alpha k}R_{k\beta}\right) }{f}
		+\frac{\frac{1}{2}-\lambda_\alpha}{f}\delta_{\alpha \beta}\\
		&-\frac{(\frac{1}{2}-\lambda_\alpha)(\frac{1}{2}-\lambda_\beta)}{f}\delta_{\alpha \gamma}\delta_{\gamma\beta}\\
		=&R_{\alpha  \beta }-\frac{\nabla f\cdot \nabla R_{\alpha \beta}}{f}+\frac{1}{f}[-\lambda_\alpha(\frac{1}{2}-\lambda_\alpha)+(\frac{1}{2}-\lambda_\alpha)-(\frac{1}{2}-\lambda_\alpha)^2]\delta_{\alpha \beta}\\
		=&R_{\alpha  \beta }-\frac{\nabla _{\nabla f}  R_{\alpha \beta}}{f}+\frac{\frac{1}{2}-\lambda_\alpha}{2f}\delta_{\alpha \beta},
	\end{aligned}
\end{equation*}
which implies \eqref{ricsigma} holds.

\smallskip
By $|Ric|^2=\frac{1}{2}$ again, we have $Ric\cdot\nabla_{\nabla f}Ric=0$.
\ban
|Ric^{\Sigma}|^2&&=|Ric|^2+\frac{|\nabla_{\nabla f} Ric|^2}{f^2}-2\frac{Ric\cdot\nabla_{\nabla f}Ric}{f}\\
&&=|Ric|^2+\frac{|\nabla_{\nabla f} Ric|^2}{f^2}+\sum_{ i=2}^5\frac{(\frac{1}{2}-\lambda_i)^2}{4f^2}.
\ean

Finally, recall that the relationship between curvature and the Weyl curvature
\ban
R_{ijkl}=&&W_{ijkl}+{\frac{1}{n-2}}(g_{ik}R_{jl}-g_{il}R_{jk}-g_{jk}R_{il}+g_{jl}R_{ik})\\
&&-{\frac{1}{(n-1)(n-2)}}R(g_{ik}g_{jl}-g_{il}g_{jk}),
\ean
and we get
\begin{equation*}
	\begin{aligned}
		K^{\Sigma}_{\alpha \beta }=&W^{\Sigma}_{\alpha \beta }+{\frac{1}{2}}(R^{\Sigma}_{\alpha \alpha }+R^{\Sigma}_{\beta \beta})-{\frac{1}{6}}R^{\Sigma}\\
=&{\frac{1}{2}}(R_{\alpha \alpha}-K_{2\alpha }+Hh_{\alpha \alpha}-h_{\alpha \alpha}^2+R_{\beta \beta}-K_{2\beta}+Hh_{\beta \beta }-h_{\beta \beta}^2)\\
&-\frac{1}{6}(1+\frac{1}{2f})+W^{\Sigma}_{\alpha \beta }\\
=&{\frac{1}{2}}\left[\lambda_{\alpha}+\lambda_{\beta}-K_{2\alpha }-K_{2\beta  }+H(h_{\alpha \alpha}+h_{\beta \beta })-h_{\alpha \alpha}^2-h_{\beta \beta}^2\right]\\
&-\frac{1}{6}(1+\frac{1}{2f})+W^{\Sigma}_{\alpha \beta }
	\end{aligned}
\end{equation*}
on the  hypersurface $\Sigma$. Together with $h_{\alpha \beta } =0 $ for $\alpha\neq\beta$, it follows from the Gauss equation that
\begin{equation*}
	\begin{aligned}
		K_{\alpha \beta }=&K^{\Sigma}_{\alpha \beta }-h_{\alpha \alpha}h_{\beta \beta}+h_{\alpha \beta }^2\\
		=&{\frac{1}{2}}\left(\lambda_{\alpha}+\lambda_{\beta}-K_{2\alpha }-K_{2\beta }+H(h_{\alpha \alpha}+h_{\beta \beta })-h_{\alpha \alpha}^2-h_{\beta \beta}^2\right)\\
		&-h_{\alpha \alpha}h_{\beta \beta}-\frac{1}{6}(1+\frac{1}{2f})+W^{\Sigma}_{\alpha \beta }\\
		=&{\frac{1}{2}}(\lambda_{\alpha}+\lambda_{\beta})-{\frac{1}{2}}(K_{2\alpha }+K_{2\beta  })+{\frac{1}{2f}}[({\frac{1}{2}}-\lambda_{\alpha })+({\frac{1}{2}}-\lambda_{\beta})]\\
		&- {\frac{1}{2f}}\left[ ({\frac{1}{2}}-\lambda_{\alpha})^2+({\frac{1}{2}}-\lambda_{\beta})^2\right]  -{\frac{1}{f}}({\frac{1}{2}}-\lambda_{\alpha})({\frac{1}{2}}-\lambda_{\beta})\\
		&-\frac{1}{6}(1+\frac{1}{2f})+W^{\Sigma}_{\alpha \beta },
	\end{aligned}
\end{equation*}
and this simplifies to equation \eqref{kab}.
We have completed the proof of equations \eqref{rsigma}-\eqref{kab} in \textbf{Claim}.
\qed

\smallskip

Next, we will derive the following key estimate of $|\nabla Ric|^2$.

\begin{prop}\label{nabla Ric}
	Let $(M^5, g, f)$ be a shrinking gradient Ricci soliton with $R=1$. If it has bounded curvature, then the inequality
	\ban
|\nabla Ric|^2\leq -0.9999 (\lambda_1+\lambda_2+\lambda_3)+1.01(K_{12}+K_{13}+K_{23})
	\ean
	holds outside a compact set of $M$.	
\end{prop}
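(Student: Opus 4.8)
The plan is to start from the Bochner-type identity for $|\mathrm{Ric}|^2$. Since both $R=1$ and $|\mathrm{Ric}|^2=\frac12$ are constant, the evolution equation $\Delta_f|R_{ij}|^2=2|R_{ij}|^2+2|\nabla R_{ij}|^2-4R_{ijij}R_{ii}R_{jj}$ recalled in Section \ref{section2} forces $\Delta_f|\mathrm{Ric}|^2=0$, so that
\[
|\nabla \mathrm{Ric}|^2=-\frac12+2\sum_{i,j}K_{ij}\lambda_i\lambda_j .
\]
Working in the eigenframe with $e_2=\frac{\nabla f}{|\nabla f|}$ and $\lambda_2=0$, every term carrying the index $2$ drops out, and the right-hand side collapses to a sum over the level-set directions $\{1,3,4,5\}$:
\[
|\nabla \mathrm{Ric}|^2=-\frac12+4\Big[K_{45}\lambda_4\lambda_5+K_{13}\lambda_1\lambda_3+\lambda_1(\lambda_4K_{14}+\lambda_5K_{15})+\lambda_3(\lambda_4K_{34}+\lambda_5K_{35})\Big].
\]

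The key step I would carry out next is to turn the dominant term $4K_{45}\lambda_4\lambda_5$ into the target quantities. Applying \eqref{kab} to the pairs $(4,5)$ and $(1,3)$, using the trace relation $\sum_{j\neq2}K_{2j}=R_{22}=\lambda_2=0$ to annihilate $K_{12}+K_{23}+K_{24}+K_{25}$, and using the four-dimensional Weyl symmetry $W^{\Sigma}_{45}=W^{\Sigma}_{13}$ established in the proof of Proposition \ref{12} (applied to $\Sigma$ with orthonormal frame $e_1,e_3,e_4,e_5$), all the $\frac1f$ and Weyl contributions cancel and one obtains the clean exact identity
\[
K_{45}=\frac12-(\lambda_1+\lambda_2+\lambda_3)+(K_{12}+K_{13}+K_{23}).
\]
I regard establishing this relation as the heart of the computation, since it is precisely what makes the sectional-curvature term $K_{12}+K_{13}+K_{23}$ reappear with a coefficient close to $1$.

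Then I would substitute this identity together with the elementary consequences of $R=1$ and $|\mathrm{Ric}|^2=\frac12$, namely $\lambda_4+\lambda_5=1-S$ and $\lambda_4\lambda_5=\frac14-S+\frac12(S^2+Q)$ with $S=\lambda_1+\lambda_2+\lambda_3$, $Q=\lambda_1^2+\lambda_2^2+\lambda_3^2$, and the Ricci identities $K_{14}+K_{15}=\lambda_1-K_{12}-K_{13}$, $K_{34}+K_{35}=\lambda_3-K_{13}-K_{23}$, so as to rewrite $|\nabla \mathrm{Ric}|^2$ purely in terms of $S$, $Q$, $P:=K_{12}+K_{13}+K_{23}$, the gap $\lambda_4-\lambda_5$, and the remaining off-diagonal curvatures. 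After this reduction the claimed inequality is equivalent, modulo lower-order terms, to an algebraic inequality of the form $3Q\le 2.0001\,S+0.01\,P$. Here the favourable sign comes from $S\ge0$ (Lemma \ref{le123}), and the perturbed coefficients $0.9999$ and $1.01$ are exactly the slack needed to absorb the quadratic remainders $5S^2$, $-2SQ$, $-4PS$ and the genuinely small cross terms $\lambda_\alpha K_{\alpha\beta}\lambda_\beta$.

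The hard part, which I would treat last, is the control of $Q$ against $S$ in the nearly-degenerate regime $\lambda_4\approx\lambda_5$, $\lambda_1\approx-\lambda_3$. Pointwise only $Q\le S$ is available from \eqref{123}, which is not strong enough for the required $3Q\lesssim 2S$; one must improve it to rule out extreme anti-correlation of the small eigenvalues. For this I would feed in the fact that outside a large compact set the soliton is smoothly close to $\mathbb{R}^3\times\mathbb{S}^2$ (Corollary \ref{naber's splitting theorem} and Theorem \ref{uniform tend to zero}), so that $\lambda_1,\lambda_3\to0$, $\lambda_4,\lambda_5\to\frac12$, and every off-diagonal sectional curvature together with $W^{\Sigma}_{13}$ approaches its product value; combined with the bounded-curvature hypothesis and the sharp estimate of $W^{\Sigma}_{13}$ coming from Proposition \ref{12}, this should force the cross terms and $Q$ down to the order needed to close the inequality with the prescribed constants. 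I expect this sharp control of $W^{\Sigma}_{13}$ (equivalently of $K_{13}$ and of the cross terms) in the nearly-degenerate regime to be the principal obstacle, in line with the strategy announced in the introduction.
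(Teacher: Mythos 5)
Your skeleton is the same as the paper's: you start from $\tfrac12\Delta_f|\mathrm{Ric}|^2=0$, you isolate the key exact relation $2K_{45}=1-2(\lambda_1+\lambda_2+\lambda_3)+2(K_{12}+K_{13}+K_{23})$, and you shift by $\lambda_4-\tfrac12$, $\lambda_5-\tfrac12$ using $(\lambda_4-\tfrac12)^2+(\lambda_5-\tfrac12)^2=S-Q$ with $S=\lambda_1+\lambda_2+\lambda_3$, $Q=\lambda_1^2+\lambda_2^2+\lambda_3^2$. One stylistic remark: the relation for $K_{45}$ does not need \eqref{kab} or the Weyl symmetry $W^\Sigma_{45}=W^\Sigma_{13}$ at all; the paper obtains it in one line from the traces $\lambda_i=\sum_{j\ne i}K_{ij}$ together with $R=1$, and your detour through the Gauss equation only reproduces this identity after cancellations you do not actually verify.

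The proposal, however, is not a proof, and the two places where it stops are exactly where the work lies. First, the step you yourself flag — controlling $Q$ against $S$ — is not closed by your suggested mechanism. Smooth convergence to $\mathbb{R}^3\times\mathbb{S}^2$ gives only $\lambda_1,\lambda_3\to 0$, hence $Q\to0$ and $S\to0$ separately; it gives no control on the ratio $Q/S$. For instance $\lambda_1=-t$, $\lambda_2=0$, $\lambda_3=t+s$ with $t\sim\sqrt{s}$ is compatible with $S\ge0$, with \eqref{123}, and with all eigenvalues tending to their product values, yet $Q\sim S$ rather than $Q=o(S)$; so "closeness to the model" cannot by itself supply the inequality $3Q\lesssim 2S$ you need. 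Second, the bilinear cross terms $\lambda_1K_{12}$ and $\lambda_3K_{23}$ produced by the substitution $K_{14}+K_{15}=\lambda_1-K_{12}-K_{13}$ are not "genuinely small" in the required sense: knowing only $K_{12}\to0$ yields $|\lambda_1K_{12}|=o(1)|\lambda_1|$, and $|\lambda_1|$ is not dominated by $S$ when $\lambda_1\approx-\lambda_3$. The paper disposes of these terms through the explicit radial formula \eqref{k1a}, $K_{2\alpha}=f^{-1}\bigl[(\nabla_{\nabla f}\mathrm{Ric})(e_\alpha,e_\alpha)+\lambda_\alpha(\tfrac12-\lambda_\alpha)\bigr]$, combined with a Cauchy--Schwarz weighted by $f^{\pm 1/4}$, which absorbs them into $o(1)|\nabla\mathrm{Ric}|^2+o(1)(\lambda_1^2+\lambda_2^2+\lambda_3^2)$; you have no substitute for this quantitative decay, and qualitative convergence does not provide one. (To be fair, the $Q$-versus-$S$ point is also the tersest step of the paper's own write-up, but your proposal neither supplies the missing absorption nor identifies a mechanism that could.)
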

\begin{proof} 
	Notice that $|Ric|^2=\frac{1}{2},\,R=1$, we start with
	\ban
	\frac{1}{2}\Delta_f|Ric|^2=R_{ij}\Delta_fR_{ij}+|\nabla Ric|^2,
	\ean
	which implies
	\begin{equation*}
		\begin{aligned}
			|\nabla Ric|^2&=-\sum_{i,j=1}^5R_{ij}\Delta_fR_{ij}\\
			&=\sum_{i,j=1}^5R_{ij}(2R_{ikjl}R_{kl}-R_{ij})\\
			&=2\sum_{i,j=1}^5 K_{ij}\lambda_i \lambda_j-|Ric|^2.
		\end{aligned}
	\end{equation*}	
	Next, we will handle the term $2\sum_{i,j=1}^5 K_{ij}\lambda_i \lambda_j$.
	\ban
	&&2\sum_{i,j=1}^5 K_{ij}\lambda_i \lambda_j\\
	&=&4K_{12}\lambda_1 \lambda_2+4K_{13}\lambda_1 \lambda_3+4K_{23}\lambda_2 \lambda_3\\
	&&+4K_{14}\lambda_1 \lambda_4+4K_{15}\lambda_1 \lambda_5+4K_{24}\lambda_2 \lambda_4+4K_{25}\lambda_2 \lambda_5\\
	&&+4K_{34}\lambda_3 \lambda_4+4K_{35}\lambda_3 \lambda_5+4K_{45}\lambda_4 \lambda_5\\
	&=&4K_{12}\lambda_1 \lambda_2+4K_{13}\lambda_1 \lambda_3+4K_{23}\lambda_2 \lambda_3
	+4K_{14}\lambda_1 (\lambda_4-\frac{1}{2})\\
	&&+4K_{15}\lambda_1 (\lambda_5-\frac{1}{2})+4K_{24}\lambda_2 (\lambda_4-\frac{1}{2})+4K_{25}\lambda_2 (\lambda_5-\frac{1}{2})\\
	&&+4K_{34}\lambda_3 (\lambda_4-\frac{1}{2})+4K_{35}\lambda_3 (\lambda_5-\frac{1}{2})+2(K_{14}+K_{15})\lambda_1\\
	&&+2(K_{24}+K_{25})\lambda_2+2 (K_{34}+K_{35})\lambda_3 +4K_{45}\lambda_4 \lambda_5.
	\ean
	Notice that,
	
	\ban
	&& \lambda_1:=R_{11}=K_{12}+K_{13}+K_{14}+K_{15},\\
	&& \lambda_2:=R_{22}=K_{21}+K_{23}+K_{24}+K_{25},\\
	&&.....\\
	&& \lambda_5:=R_{55}=K_{51}+K_{52}+K_{53}+K_{54},
	\ean
	we have
	\ban
	K_{14}+K_{24}+K_{34}=\lambda_4-K_{45},\,\,
	K_{15}+K_{25}+K_{35}=\lambda_5-K_{45}
	\ean
	and
	\begin{equation*}
		\begin{aligned}
			2K_{45}&=&(\lambda_4+\lambda_5-\lambda_1-\lambda_2-\lambda_3)+2(K_{12}+K_{13}+K_{23})\\
			&=&[1-2(\lambda_1+\lambda_2+\lambda_3)]+2(K_{12}+K_{13}+K_{23}).
		\end{aligned}
	\end{equation*}
	Hence,
	\ban
	&&4K_{45}\lambda_4 \lambda_5\\
	&=&2 \left\lbrace  [1-2(\lambda_1+\lambda_2+\lambda_3)]+2(K_{12}+K_{13}+K_{23}) \right\rbrace  \lambda_4\lambda_5\\
	&=&2(\lambda_4-\frac{1}{2})(\lambda_5-\frac{1}{2})-(\lambda_1+\lambda_2+\lambda_3)+\frac{1}{2}-(\lambda_1+\lambda_2+\lambda_3)\\
	&&-4(\lambda_1+\lambda_2+\lambda_3)(\lambda_4-\frac{1}{2})(\lambda_5-\frac{1}{2})+2(\lambda_1+\lambda_2+\lambda_3)^2\\
	&&+(K_{12}+K_{13}+K_{23})+4(K_{12}+K_{13}+K_{23})(\lambda_4-\frac{1}{2})(\lambda_5-\frac{1}{2})\\
	&&-2(K_{12}+K_{13}+K_{23})(\lambda_1+\lambda_2+\lambda_3),
	\ean
	because of the fact
	\ban
	\lambda_4\lambda_5&=&(\lambda_4-\frac{1}{2})(\lambda_5-\frac{1}{2})+\frac{1}{2}(1-\lambda_1-\lambda_2-\lambda_3)-\frac{1}{4}\\
	&=&(\lambda_4-\frac{1}{2})(\lambda_5-\frac{1}{2})-\frac{1}{2}(\lambda_1+\lambda_2+\lambda_3)+\frac{1}{4}.
	\ean
	Therefore, we have
	\ban
	&&2\sum_{i,j=1}^5 K_{ij}\lambda_i \lambda_j\\
	&=&4K_{12}\lambda_1 \lambda_2+4K_{13}\lambda_1 \lambda_3+4K_{23}\lambda_2 \lambda_3
	+4K_{14}\lambda_1 (\lambda_4-\frac{1}{2})\\
	&&+4K_{15}\lambda_1 (\lambda_5-\frac{1}{2})+4K_{24}\lambda_2 (\lambda_4-\frac{1}{2})+4K_{25}\lambda_2 (\lambda_5-\frac{1}{2})\\
	&&+4K_{34}\lambda_3 (\lambda_4-\frac{1}{2})+4K_{35}\lambda_3 (\lambda_5-\frac{1}{2})
	+2(\lambda_1-K_{12}-K_{13})\lambda_1\\
	&&+2(\lambda_2-K_{12}-K_{23})\lambda_2+2 (\lambda_3-K_{13}-K_{23})\lambda_3\\
	&&+(K_{12}+K_{13}+K_{23})-2(K_{12}+K_{13}+K_{23})(\lambda_1+\lambda_2+\lambda_3)\\
	&&+4(K_{12}+K_{13}+K_{23})(\lambda_4-\frac{1}{2})(\lambda_5-\frac{1}{2})\\
	&&-4(\lambda_1+\lambda_2+\lambda_3)(\lambda_4-\frac{1}{2})(\lambda_5-\frac{1}{2})\\
	&&+2(\lambda_4-\frac{1}{2})(\lambda_5-\frac{1}{2})-2(\lambda_1+\lambda_2+\lambda_3)+2(\lambda_1+\lambda_2+\lambda_3)^2+\frac{1}{2}.
	\ean
	
	Similar argument as Theorem \ref{uniform tend to zero} implies that $(M^5, g, f)$ converge smoothly to $\mathbb{R}\times \mathbb{R}^2\times \mathbb{S}^2$ at infinity, so $K_{ij}\rightarrow 0$ at infinity for $1\leq i,j\leq 5$ except for $K_{45}$. Thus, we have
	\ban
	&&|\nabla Ric|^2=2\sum_{i,j=1}^5 K_{ij}\lambda_i \lambda_j-\frac{1}{2}\\
	&=& o(1)(\lambda_1^2+\lambda_2^2+\lambda_3^2)\\
	&&+o(1)\left( \lambda_1^2+\lambda_2^2+\lambda_3^2+(\lambda_4-\frac{1}{2})^2+(\lambda_5-\frac{1}{2})^2\right) \\
	&&-2[\lambda_1(K_{12}+K_{13})+\lambda_2(K_{21}+K_{23})+\lambda_3(K_{13}+K_{23})]\\
	&&+2(\lambda^2_1+\lambda^2_2+\lambda^2_3)+(K_{12}+K_{13}+K_{23})\\
	&&-2(K_{12}+K_{13}+K_{23})(\lambda_1+\lambda_2+\lambda_3)\\
	&&+4(K_{12}+K_{13}+K_{23})(\lambda_4-\frac{1}{2})(\lambda_5-\frac{1}{2})\\
	&&-4(\lambda_1+\lambda_2+\lambda_3)(\lambda_4-\frac{1}{2})(\lambda_5-\frac{1}{2})\\
	&&+2(\lambda_4-\frac{1}{2})(\lambda_5-\frac{1}{2})-2(\lambda_1+\lambda_2+\lambda_3)+2(\lambda_1+\lambda_2+\lambda_3)^2.
	\ean

\textbf{Claim}
\ban
&&-2[\lambda_1(K_{12}+K_{13})+\lambda_2(K_{21}+K_{23})+\lambda_3(K_{13}+K_{23})]\\
&&\leq o(1)|\nabla Ric|^2+o(1)(\lambda_1^2+\lambda_2^2+\lambda_3^2).
\ean
Since $\lambda_2=0$,
\ban
&&-2[\lambda_1(K_{12}+K_{13})+\lambda_3(K_{13}+K_{23})]\\
&&=-2 \lambda_1 K_{21}-2\lambda_3 K_{23}-2(\lambda_1+\lambda_2+\lambda_3)K_{13}\\
&&=-2 \lambda_1 K_{21}-2\lambda_3 K_{23}+o(1)(\lambda_1+\lambda_2+\lambda_3)
\ean
due to $K_{13}\rightarrow 0$ at infinity.

 Recall that
 \ban
  K_{21}=\frac{(\nabla_{\nabla f}Ric)(e_1, e_1)+\frac{1}{2}\lambda_1-\lambda_1^2}{f}
  \ean

  and
  \ban
  K_{23}=\frac{(\nabla_{\nabla f}Ric)(e_3, e_3)+\frac{1}{2}\lambda_3-\lambda_3^2}{f},
  \ean
  it is immediate that
  \ban
  &&f^{\frac{1}{4}}\left(K_{12}^2+K_{23}^2\right)\\
 && \leq     f^{\frac{1}{4}}\left( \frac{4|\nabla Ric|^2 |\nabla f|^2}{f^2}+\frac{4(\lambda_1^2+\lambda_3^2)}{f^2}   \right)             \\
 &&\leq o(1)(\lambda_1^2+\lambda_2^2+\lambda_3^2)+o(1)|\nabla Ric|^2
  \ean
  outside a compact set,
 hence
\ban
&&-2 \lambda_1 K_{21}-2\lambda_3 K_{23}\\
&& =-2 \frac{\lambda_1}{f^\frac{1}{8}}\cdot f^{\frac{1}{8}}K_{12}-2 \frac{\lambda_3}{f^\frac{1}{8}}\cdot f^{\frac{1}{8}}K_{23}\\
&&\leq  \frac{\lambda_1^2+\lambda_3^2}{f^\frac{1}{4}}+f^{\frac{1}{4}}\left(K_{12}^2+K_{23}^2\right)\\
&&\leq o(1)(\lambda_1^2+\lambda_2^2+\lambda_3^2)+o(1)|\nabla Ric|^2
\ean
outside a compact set.
This finish the proof of the Claim.

	Notice that $|Ric|^2=\frac{1}{2},\,R=1$ again, we obtain that
	\ban
	(\lambda_4-\frac{1}{2})^2+(\lambda_5-\frac{1}{2})^2&=&\lambda_1(1-\lambda_1)+\lambda_2(1-\lambda_2)+\lambda_3(1-\lambda_3)\\
	&\leq& \lambda_1+\lambda_2+\lambda_3,
	\ean
	and then by Cauchy inequality,
	\ban
	2(\lambda_4-\frac{1}{2})(\lambda_5-\frac{1}{2})\leq (\lambda_4-\frac{1}{2})^2+(\lambda_5-\frac{1}{2})^2\leq \lambda_1+\lambda_2+\lambda_3.
	\ean
	Therefore, we get that
	\ban
	&&|\nabla Ric|^2\\
	&\leq& o(1)(\lambda_1+\lambda_2+\lambda_3)+(K_{12}+K_{13}+K_{23})\\
	&&+(\lambda_1+\lambda_2+\lambda_3)-2(\lambda_1+\lambda_2+\lambda_3)+o(1)|\nabla Ric|^2\\
	&\leq&-0.99 (\lambda_1+\lambda_2+\lambda_3)+(K_{12}+K_{13}+K_{23})+o(1)|\nabla Ric|^2.
	\ean
	By the absorbing inequality, it is easy to see
that
\ban
|\nabla Ric|^2\leq -0.9999 (\lambda_1+\lambda_2+\lambda_3)+1.01(K_{12}+K_{13}+K_{23})
\ean
outside a large compact set.
We have completed the proof of Proposition \ref{nabla Ric}.
\end{proof}

\textbf{Remark.} It follows from Lemma \ref{le123} and Proposition \ref{nabla Ric} that
$K_{12}+K_{13}+K_{23}$ is nonnegative outside a compact set of $M$.

\vspace{0.5cm}

\smallskip
Subsequently, we consider the estimate of the Weyl curvature tensor of level hypersurfaces to handle the term $(K_{12}+K_{13}+K_{23})$.
\begin{lem}\label{w}
Let $(M^5, g, f)$ be a shrinking gradient Ricci soliton with $R=1$. Then
\[
\int_{\Sigma(s)}{|W^{\Sigma(s)}|}^2d\sigma_{\Sigma(s)}
\leq\int_{\Sigma(s)}\left[ \frac{1}{3}(1+{\frac{1}{2f}})^2+\frac{2|\nabla_{\nabla f} Ric|^2}{f^2}\right]d\sigma_{\Sigma(s)}.
\]		
\end{lem}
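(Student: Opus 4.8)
The plan is to extract $|W^{\Sigma(s)}|^2$ from the four-dimensional Gauss--Bonnet--Chern formula applied to the level hypersurface $\Sigma=\Sigma(s)$, and then to reduce the asserted inequality to the volume bound of Proposition \ref{volcritical}. Recall from Section \ref{section4} that every regular level set $\Sigma(s)=f^{-1}(s)$ is a closed $4$-manifold diffeomorphic to $\mathbb{S}^2\times\mathbb{S}^2$, so that $\chi(\Sigma(s))=\chi(\mathbb{S}^2)^2=4$. The Gauss--Bonnet--Chern formula in dimension four,
\[
\chi(\Sigma)=\frac{1}{8\pi^2}\int_{\Sigma}\Big(\tfrac14|W^{\Sigma}|^2-\tfrac12|\mathring{Ric}^{\Sigma}|^2+\tfrac{1}{24}(R^{\Sigma})^2\Big)\,d\sigma_{\Sigma},
\]
together with $|\mathring{Ric}^{\Sigma}|^2=|Ric^{\Sigma}|^2-\tfrac14(R^{\Sigma})^2$ (here $\mathring{Ric}^{\Sigma}$ is the trace-free Ricci tensor of $\Sigma$), can be solved for the Weyl term to give
\[
\int_{\Sigma}|W^{\Sigma}|^2\,d\sigma_{\Sigma}=32\pi^2\chi(\Sigma)+2\int_{\Sigma}|Ric^{\Sigma}|^2\,d\sigma_{\Sigma}-\tfrac23\int_{\Sigma}(R^{\Sigma})^2\,d\sigma_{\Sigma}.
\]

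Next I would feed in the soliton identities \eqref{rsigma} and \eqref{ricsigma'}. Because $R=1$, $|Ric|^2=\tfrac12$ and $\lambda_2=0$, the sum in \eqref{ricsigma'} evaluates to $\sum_{i=2}^5(\tfrac12-\lambda_i)^2=\tfrac12+\lambda_1(1-\lambda_1)$, so that $|Ric^{\Sigma}|^2=\tfrac12+\tfrac{|\nabla_{\nabla f}Ric|^2}{f^2}+\tfrac{1}{8f^2}+\tfrac{\lambda_1(1-\lambda_1)}{4f^2}$, while \eqref{rsigma} gives $(R^{\Sigma})^2=1+\tfrac1f+\tfrac{1}{4f^2}$. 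Substituting these into the previous expression for $\int_{\Sigma}|W^\Sigma|^2$, the two copies of $\int_{\Sigma}\tfrac{|\nabla_{\nabla f}Ric|^2}{f^2}$ cancel against the corresponding term in the statement, and the constant together with the $(R^{\Sigma})^2$ contributions telescope. A direct computation then shows that the difference between the two sides of the lemma is
\[
\int_{\Sigma}|W^{\Sigma}|^2-\Big[\tfrac13\int_{\Sigma}(R^{\Sigma})^2+2\int_{\Sigma}\tfrac{|\nabla_{\nabla f}Ric|^2}{f^2}\Big]=32\pi^2\chi(\Sigma)-\int_{\Sigma}\frac1f\,d\sigma_{\Sigma}+\frac12\int_{\Sigma}\frac{\lambda_1(1-\lambda_1)}{f^2}\,d\sigma_{\Sigma}.
\]

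Finally I would evaluate each piece on $\Sigma(s)$, where $f\equiv s$. By Theorem \ref{levelset}(iv) one has $\mathrm{Vol}(\Sigma(s))=16\pi s|M_-|$, whence $\int_{\Sigma}f^{-1}\,d\sigma_{\Sigma}=16\pi|M_-|$ and $32\pi^2\chi(\Sigma)=128\pi^2$. Moreover $\lambda_1\le\lambda_2=0$ forces $\lambda_1(1-\lambda_1)\le0$, so the last integral is nonpositive. Hence the displayed difference is at most $128\pi^2-16\pi|M_-|$, which is $\le0$ precisely because $|M_-|\ge 8\pi$ by Proposition \ref{volcritical}. This proves the lemma.

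The step I expect to be the real work is the algebraic reduction of the second paragraph: computing $|\mathring{Ric}^{\Sigma}|^2$ from \eqref{ricsigma'}, carrying the Gauss--Bonnet--Chern normalization constants correctly, and verifying that the gradient term $\int_\Sigma f^{-2}|\nabla_{\nabla f}Ric|^2$ cancels so that the entire estimate collapses onto the single scalar inequality $32\pi^2\chi(\Sigma)\le\int_\Sigma f^{-1}$. Once that collapse is confirmed, the only genuinely geometric inputs are the topological identity $\chi(\Sigma)=4$ and the focal-variety volume bound $|M_-|\ge 8\pi$, both already available; so the estimate is forced.
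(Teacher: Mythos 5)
Your proposal is correct and follows essentially the same route as the paper: both solve the four-dimensional Gauss--Bonnet--Chern formula for $\int_{\Sigma}|W^{\Sigma}|^2$, substitute \eqref{rsigma} and \eqref{ricsigma'}, use $\lambda_1\le\lambda_2=0$ to discard the $\lambda_1(1-\lambda_1)$ term, and close the estimate with $128\pi^2=32\pi^2\chi(\Sigma(s))\le\int_{\Sigma(s)}f^{-1}\,d\sigma$, which is exactly Corollary \ref{volsigma} (i.e. $|M_-|\ge 8\pi$). The only cosmetic difference is that you isolate the difference of the two sides as a single identity before estimating, whereas the paper estimates term by term inside the chain of (in)equalities.
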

\begin{proof}
As for $\int_{\Sigma(s)}{|W^{\Sigma(s)}|^2}d\sigma_{\Sigma(s)}$, using the four-dimensional Gauss-Bonnet-Chern formula again, it follows from \eqref{rsigma}, \eqref{ricsigma'} and Corollary \ref{volsigma} that
\ban
&&\int_{\Sigma(s)}{|W^{\Sigma(s)}|}^2d\sigma_{\Sigma(s)}\\
&=&\int_{\Sigma(s)}2 \left[ |Ric^{\Sigma(s)}|^2-\frac{1}{3}(R^{\Sigma(s)})^2\right]d\sigma_{\Sigma(s)}+32\pi^2\chi(\Sigma(s))\\
&=&\int_{\Sigma(s)} \left[2|Ric|^2+\frac{2|\nabla_{\nabla f} Ric|^2}{f^2}+\sum_{ i=2}^5\frac{(\frac{1}{2}-\lambda_i)^2}{2f^2}-\frac{2}{3}(1+{\frac{1}{2f}})^2\right]d\sigma_{\Sigma(s)}\\
&&+128\pi^2\\
&\leq&\int_{\Sigma(s)} \left[ 1+\frac{2|\nabla_{\nabla f} Ric|^2}{f^2}+\frac{1-R+|Ric|^2}{2 f^2}-\frac{2}{3}(1+{\frac{1}{2f}})^2+\frac{1}{f}\right]d\sigma_{\Sigma(s)} \\
&=&\int_{\Sigma(s)}\left[ \frac{1}{3}(1+{\frac{1}{2f}})^2+\frac{2|\nabla_{\nabla f} Ric|^2}{f^2}\right]d\sigma_{\Sigma(s)}.
\ean
We have completed the proof of Lemma \ref{w}.
\end{proof}

\smallskip
\begin{prop}\label{w23}
 Let $(M^5, g, f)$ be a shrinking gradient Ricci soliton with $R=1$. If it has  bounded curvature, then
	\[
\int_{\Sigma(s)}W^{\Sigma(s)}_{13} d\sigma_{\Sigma(s)}
\leq\frac{1}{6}\int_{\Sigma(s)} (1+{\frac{1}{2f}}) d\sigma_{\Sigma(s)}+\int_{\Sigma(s)}\frac{|\nabla Ric|^2}{f} d\sigma_{\Sigma(s)}.
\]		
\end{prop}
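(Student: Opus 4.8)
The plan is to bound the pointwise size of $W^{\Sigma(s)}_{13}$ by the full Weyl norm $|W^{\Sigma(s)}|$ using Proposition \ref{12}, and then to control $\int_{\Sigma(s)}|W^{\Sigma(s)}|^2$ via the Gauss--Bonnet--Chern estimate of Lemma \ref{w}, exploiting throughout the crucial fact that $f\equiv s$ is constant on the level set $\Sigma(s)$. First I would apply Proposition \ref{12} to the four-dimensional hypersurface $\Sigma(s)$ with the tangent orthonormal frame $\{e_1,e_3,e_4,e_5\}$; after the obvious relabeling (its proof uses only the total tracelessness of $W^{\Sigma(s)}$), the diagonal component $W^{\Sigma(s)}_{13}$ obeys
\[
W^{\Sigma(s)}_{13}\le |W^{\Sigma(s)}_{13}|\le \frac{1}{2\sqrt{3}}\,|W^{\Sigma(s)}|.
\]
Integrating over $\Sigma(s)$ and applying Cauchy--Schwarz then gives
\[
\int_{\Sigma(s)} W^{\Sigma(s)}_{13}\,d\sigma_{\Sigma(s)}
\le \frac{1}{2\sqrt{3}}\Big(\int_{\Sigma(s)} |W^{\Sigma(s)}|^2\,d\sigma_{\Sigma(s)}\Big)^{1/2}\,|\Sigma(s)|^{1/2}.
\]

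Next I would insert Lemma \ref{w}. Since $f\equiv s$ on $\Sigma(s)$, the coefficient $1+\frac{1}{2f}$ is constant there, so $\int_{\Sigma(s)}\frac{1}{3}(1+\frac{1}{2f})^2\,d\sigma_{\Sigma(s)}=\frac{1}{3}(1+\frac{1}{2s})^2|\Sigma(s)|$; moreover $|\nabla_{\nabla f}Ric|^2\le |\nabla f|^2|\nabla Ric|^2=f|\nabla Ric|^2$, so $\frac{2|\nabla_{\nabla f}Ric|^2}{f^2}\le \frac{2|\nabla Ric|^2}{f}$. Abbreviating $a=1+\frac{1}{2s}$, $V=|\Sigma(s)|$ and $J=\int_{\Sigma(s)}\frac{|\nabla Ric|^2}{f}\,d\sigma_{\Sigma(s)}$, Lemma \ref{w} reads $\int_{\Sigma(s)}|W^{\Sigma(s)}|^2\le \frac{1}{3}a^2V+2J$, and the previous display collapses to
\[
\int_{\Sigma(s)} W^{\Sigma(s)}_{13}\,d\sigma_{\Sigma(s)}
\le \frac{1}{2\sqrt{3}}\Big(\tfrac{1}{3}a^2V+2J\Big)^{1/2}V^{1/2}
=\frac{1}{6}\big(a^2V^2+6JV\big)^{1/2},
\]
where the last equality uses $\tfrac{1}{3}a^2V+2J=\tfrac{1}{3}(a^2V+6J)$ together with $\frac{1}{2\sqrt3}\cdot\frac{1}{\sqrt3}=\frac16$.

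Finally I would close the estimate with the elementary inequality $\frac{1}{6}(a^2V^2+6JV)^{1/2}\le \frac{1}{6}aV+J$. Squaring and cancelling $\frac{1}{36}a^2V^2$, this is equivalent to $\frac{1}{6}JV\le \frac{1}{3}aVJ+J^2$, i.e. $J\big(\frac{1}{3}aV-\frac{1}{6}V+J\big)\ge 0$, which holds because $J\ge 0$ and $a=1+\frac{1}{2s}>\frac{1}{2}$ forces $\frac{1}{3}aV-\frac{1}{6}V=\frac{V}{6}(2a-1)>0$. Since $\frac{1}{6}aV+J=\frac{1}{6}\int_{\Sigma(s)}(1+\frac{1}{2f})\,d\sigma_{\Sigma(s)}+\int_{\Sigma(s)}\frac{|\nabla Ric|^2}{f}\,d\sigma_{\Sigma(s)}$, this is exactly the claimed inequality. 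The delicate point, and the part I expect to require care, is the bookkeeping of constants: it is the precise interplay of the factor $\frac{1}{12}$ from Proposition \ref{12}, the factor $\frac{1}{3}$ from the Gauss--Bonnet--Chern computation in Lemma \ref{w}, and the Cauchy--Schwarz loss that conspire to yield exactly the coefficient $\frac{1}{6}$. It is also here that the passage $|\nabla_{\nabla f}Ric|^2\le f|\nabla Ric|^2$ and the Young-type final step absorb the coefficient $2$ of Lemma \ref{w} into the coefficient $1$ of the target, while the structural fact $a\ge 1$ on every level set is what makes the concluding comparison valid.
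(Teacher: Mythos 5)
Your proposal is correct and follows essentially the same route as the paper: pointwise control of $W^{\Sigma(s)}_{13}$ via Proposition \ref{12}, Cauchy--Schwarz on the level set, insertion of Lemma \ref{w}, the bound $|\nabla_{\nabla f}Ric|^2\le f|\nabla Ric|^2$, and a final algebraic absorption (the paper splits $\sqrt{A+B}$ via the identity $\sqrt{A+B}=\sqrt{A}+\frac{B}{\sqrt{A+B}+\sqrt{A}}$, while you verify the equivalent Young-type inequality by squaring; both hinge on $1+\frac{1}{2f}$ being a constant $\ge 1$ on $\Sigma(s)$). The constant bookkeeping you flag as delicate checks out and matches the paper's.
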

\begin{proof}
	By Proprosition \ref{12}, we easily get
	\[
	{\left| W^{\Sigma(s)}_{13}\right| }^2\leq\frac{1}{12}\left| W^{\Sigma(s)}\right| ^2
	\]
and then
	
	\ban
	&&\int_{\Sigma(s)}{W^{\Sigma(s)}_{13}} d\sigma_{\Sigma(s)}\\
	&\leq&\left( \int_{\Sigma(s)}{|W^{\Sigma(s)}_{13}|}^2 d\sigma_{\Sigma(s)}\right) ^\frac{1}{2}\cdot{\text{Vol}(\Sigma(s))}^\frac{1}{2}\\
	&\leq&\left( \frac{1}{12}\int_{\Sigma(s)}{|W^{\Sigma(s)}|}^2 d\sigma_{\Sigma(s)})\right) ^\frac{1}{2}\cdot{\text{Vol}(\Sigma(s))}^\frac{1}{2}.\\
	\ean
From Lemma \ref{w} and the above equation, we have
\begin{equation*}
\begin{aligned}
&\int_{\Sigma}{W^{\Sigma(s)}_{13}} d\sigma_{\Sigma(s)}\\
\leq&\left\lbrace \frac{1}{12}\int_{\Sigma(s)} \left[ \frac{1}{3}(1+{\frac{1}{2f}})^2+\frac{2|\nabla_{\nabla f} Ric|^2}{f^2}\right] d\sigma_{\Sigma(s)}\right\rbrace ^{\frac{1}{2}}\cdot {\text{Vol}(\Sigma(s))}^\frac{1}{2}\\
=&\frac{1}{6}\left\lbrace \int_{\Sigma(s)} \left[ (1+{\frac{1}{2f}})^2+\frac{6|\nabla_{\nabla f} Ric|^2}{f^2}\right] d\sigma_{\Sigma(s)} \right\rbrace ^{\frac{1}{2}}\cdot {\text{Vol}(\Sigma(s))}^\frac{1}{2}\\
=&\frac{1}{6}\left\lbrace \int_{\Sigma(s)} \left[ (1+{\frac{1}{2f}})^2\right] d\sigma_{\Sigma(s)}  \right\rbrace ^{\frac{1}{2}}\cdot {\text{Vol}(\Sigma(s))}^\frac{1}{2}\\
+&\frac{\int_{\Sigma(s)} \frac{|\nabla_{\nabla f} Ric|^2}{f^2} d\sigma_{\Sigma(s)} \cdot {\text{Vol}(\Sigma(s))}^\frac{1}{2}}
{\left\lbrace \int_{\Sigma(s)} \left[ (1+{\frac{1}{2f}})^2+\frac{6|\nabla_{\nabla f} Ric|^2}{f^2}\right] d\sigma_{\Sigma(s)} \right\rbrace ^{\frac{1}{2}}+\left\lbrace \int_{\Sigma(s)} \left[ (1+{\frac{1}{2f}})^2\right] d\sigma_{\Sigma(s)} \right\rbrace ^{\frac{1}{2}}}\\
\leq&\frac{1}{6}\int_{\Sigma(s)} (1+{\frac{1}{2f}}) d\sigma_{\Sigma(s)}
+\int_{\Sigma(s)}\frac{|\nabla Ric|^2}{f} d\sigma_{\Sigma(s)},
\end{aligned}
\end{equation*}
where in the the second equality we used the algebraic fact
\[
\sqrt{A+B}=\sqrt{A}+\dfrac{B}{\sqrt{A+B}+\sqrt{B}}.
\]
We have completed the proof of Proposition \ref{w23}.
\end{proof}

\begin{lem}\label{k23}
Let $(M^5, g, f)$ be a shrinking gradient Ricci soliton with $R=1$. If it has bounded curvature, then we have
\ban
\int_{\Sigma(s)}(K_{12}+K_{13}+K_{23}) d\sigma_{\Sigma(s)}
&\leq& \frac{0.6}{s}
\int_{\Sigma(s)}\langle \nabla (\lambda_1+\lambda_2+\lambda_3), \nabla f \rangle d\sigma_{\Sigma(s)}\\
&&+0.66\int_{\Sigma(s)}(\lambda_1+\lambda_2+\lambda_3)  d\sigma_{\Sigma(s)}
\ean
for sufficiently large almost everywhere $s>0$.
\end{lem}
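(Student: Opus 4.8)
The plan is to convert the pointwise integrand $K_{12}+K_{13}+K_{23}$ into the quantities already controlled by the \textbf{Claim} and by Propositions \ref{nabla Ric} and \ref{w23}, and then to close the estimate by an absorbing argument. Set $u:=\lambda_1+\lambda_2+\lambda_3$; since $\lambda_2=0$ one has $u=\lambda_1+\lambda_3$ and $\nabla\lambda_2=0$. Because $e_2=\nabla f/|\nabla f|$ is normal to $\Sigma(s)$, the curvatures $K_{12}=K_{21}$ and $K_{23}$ are of the normal type \eqref{k1a}, while $K_{13}$ is tangential and governed by \eqref{kab}. Adding $K_{21}+K_{23}$ to the expression for $K_{13}$ given by \eqref{kab} with $(\alpha,\beta)=(1,3)$ yields the identity
\[
K_{12}+K_{13}+K_{23}=\tfrac12(K_{21}+K_{23})+\tfrac12 u-\tfrac16\Big(1+\tfrac1{2f}\Big)+W^{\Sigma}_{13}+\tfrac1{2f}(1-u)u .
\]
Using \eqref{k1a} together with the identity $\nabla_{\nabla f}R_{\alpha\alpha}=\langle\nabla\lambda_\alpha,\nabla f\rangle$ for a unit Ricci eigenvector field (valid almost everywhere, by the Lipschitz regularization already employed in the proof of Proposition \ref{volcritical}), one rewrites $\tfrac12(K_{21}+K_{23})=\tfrac1{2f}\langle\nabla u,\nabla f\rangle+\tfrac{u}{4f}-\tfrac{\lambda_1^2+\lambda_3^2}{2f}$, which is where the gradient term $\langle\nabla(\lambda_1+\lambda_2+\lambda_3),\nabla f\rangle$ enters.

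Next I would integrate this identity over $\Sigma(s)$, on which $f\equiv s$. The integral of $W^{\Sigma}_{13}$ is estimated by Proposition \ref{w23}; crucially its leading term $\tfrac16\int(1+\tfrac1{2f})$ cancels the $-\tfrac16\int(1+\tfrac1{2f})$ coming from the identity, leaving only $\int_{\Sigma(s)}\tfrac{|\nabla Ric|^2}{f}=\tfrac1s\int_{\Sigma(s)}|\nabla Ric|^2$. Bounding the last integrand by Proposition \ref{nabla Ric} produces a harmless negative term $-\tfrac{0.9999}{s}\int_{\Sigma(s)}u$ and the self-referential term $\tfrac{1.01}{s}\int_{\Sigma(s)}(K_{12}+K_{13}+K_{23})$. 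Abbreviating the three level-set integrals by $I(s)$, $U(s)\ge0$ and $G(s)=\int_{\Sigma(s)}\langle\nabla u,\nabla f\rangle$, and collecting terms, I obtain
\[
\Big(1-\tfrac{1.01}{s}\Big)I(s)\le \tfrac1{2s}G(s)+\Big(\tfrac12-\tfrac{0.2499}{s}\Big)U(s)-\tfrac1{2s}\int_{\Sigma(s)}(\lambda_1^2+\lambda_3^2+u^2).
\]
Discarding the nonnegative last integral and the negative $-\tfrac{0.2499}{s}U(s)$ leaves the clean inequality $(1-\tfrac{1.01}{s})I(s)\le \tfrac1{2s}G(s)+\tfrac12 U(s)$.

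To finish, I would invoke the Remark after Proposition \ref{nabla Ric}, which gives $K_{12}+K_{13}+K_{23}\ge0$ outside a compact set, hence $I(s)\ge0$ and therefore $\tfrac1{2s}G(s)+\tfrac12U(s)\ge0$, i.e. $G(s)\ge -sU(s)$. For $s$ large enough that $c:=(1-\tfrac{1.01}{s})^{-1}\le 1.2$ we have $I(s)\le c\big(\tfrac1{2s}G(s)+\tfrac12U(s)\big)$, and
\[
\tfrac{0.6}{s}G(s)+0.66\,U(s)-c\Big(\tfrac1{2s}G(s)+\tfrac12U(s)\Big)=\Big(0.6-\tfrac{c}{2}\Big)\tfrac1s G(s)+\Big(0.66-\tfrac{c}{2}\Big)U(s).
\]
If $G(s)\ge0$ both coefficients are nonnegative; if $G(s)<0$, the bound $\tfrac1s G(s)\ge -U(s)$ shows the right-hand side is at least $(0.66-0.6)U(s)=0.06\,U(s)\ge0$. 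In either case $I(s)\le \tfrac{0.6}{s}G(s)+0.66\,U(s)$, which is exactly the assertion for almost every large $s$.

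The main obstacle is precisely the indefinite sign of $G(s)$: since $u\to0$ at infinity one in fact expects $\langle\nabla u,\nabla f\rangle\le0$, so one cannot na\"ively enlarge the coefficient $\tfrac1{2s}$ of $G(s)$ up to $\tfrac{0.6}{s}$. The resolution is the exact gap $0.66-0.6=0.06$, designed to absorb the extremal case $G(s)=-sU(s)$; this is what dictates the specific constants $0.9999$ and $1.01$ in Proposition \ref{nabla Ric} and the resulting window $c\le1.2$. A secondary technical point is the low regularity of the Ricci eigenvalues: the identity $\nabla_{\nabla f}R_{\alpha\alpha}=\langle\nabla\lambda_\alpha,\nabla f\rangle$ and the level-set integrals of $\nabla u$ are only justified almost everywhere, which is the source of the ``almost everywhere $s$'' in the statement.
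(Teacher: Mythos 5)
Your proposal is correct and follows essentially the same route as the paper: express $K_{13}$ via \eqref{kab}, convert $K_{12}+K_{23}$ into $\tfrac{1}{f}\langle\nabla(\lambda_1+\lambda_2+\lambda_3),\nabla f\rangle$ plus lower-order terms via \eqref{k1a}, cancel the $\tfrac16(1+\tfrac1{2f})$ term against Proposition \ref{w23}, absorb the self-referential $\tfrac{C}{s}\int(K_{12}+K_{13}+K_{23})$ term coming from Proposition \ref{nabla Ric}, and divide. The only difference is that you justify the final passage to the coefficients $\tfrac{0.6}{s}$ and $0.66$ by an explicit case analysis on the sign of $\int_{\Sigma(s)}\langle\nabla(\lambda_1+\lambda_2+\lambda_3),\nabla f\rangle$ using the nonnegativity of $K_{12}+K_{13}+K_{23}$, a point the paper leaves implicit.
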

\Rk.    $\lambda_1+\lambda_2+\lambda_3$   is only Lipschitz continuous and differentiable almost everywhere.                             Here                    $\langle\nabla (\lambda_1+\lambda_2+\lambda_3), \nabla f \rangle$ can also be understood as follows: At any $p\in M$, choose $\{e_1, e_2, e_3\}$ such that $\{e_1, e_2, e_3\}$ are the eigenvectors corresponding to the eigenvalues $\{\lambda_1, \lambda_2, \lambda_3\}$, where $\lambda_1\leq  \lambda_2\leq \lambda_3$ are the smallest three eigenvalues of Ricci curvature, take parallel  transport along all the geodesics starting from $p$, then we obtain three smooth vector fields $\{e_1, e_2, e_3\}$  in a neighborhood of $p$ such that $e_1(p)=e_1$, $e_2(p)=e_2$, $e_3(p)=e_3$. It is easy to check that $(\nabla_{\nabla f}Ric)(e_1, e_1)+(\nabla_{\nabla f}Ric)(e_2, e_2)+(\nabla_{\nabla f}Ric)(e_3, e_3)=\nabla f\cdot \left(Ric(e_1, e_1)+Ric(e_2, e_2)+Ric(e_3, e_3)\right) =\langle\nabla f ,  (\lambda_1+\lambda_2+\lambda_3)\rangle$ if $\lambda_1+\lambda_2+\lambda_3$ is differentiable at $p$.
\begin{proof}
One key step is to address the term $K_{13}$. It follows from \eqref{kab} that
\ban
K_{13}&=&{\frac{1}{2}}(\lambda_{1}+\lambda_{3})-{\frac{1}{2}}(K_{21}+K_{23 })-{\frac{1}{6}}(1+{\frac{1}{2f}})+W^{\Sigma}_{13}\\
&&+{\frac{1}{2f}}[({\frac{1}{2}}-\lambda_{1})+({\frac{1}{2}}-\lambda_{3})](\lambda_1+\lambda_3)\\
&\leq&{\frac{1}{2}}(\lambda_{1}+\lambda_{3})-{\frac{1}{6}}(1+{\frac{1}{2f}})-{\frac{1}{2}}(K_{21}+K_{23})+{\frac{(\lambda_{1}+\lambda_{3})}{2f}}+W^{\Sigma}_{13}.\\
\ean
Hence,
\ban
&&2(K_{12}+K_{13}+K_{23})\\
&\leq&(\lambda_{1}+\lambda_{3})-{\frac{1}{3}}(1+{\frac{1}{2f}})+(K_{21}+K_{23 })+{\frac{(\lambda_{1}+\lambda_{3})}{f}}+2W^{\Sigma}_{13}.\\
\ean

 Recall again
 \ban
  K_{21}=\frac{(\nabla_{\nabla f}Ric)(e_1, e_1)+\frac{1}{2}\lambda_1-\lambda_1^2}{f}
  \ean

  and
  \ban
  K_{23}=\frac{(\nabla_{\nabla f}Ric)(e_3, e_3)+\frac{1}{2}\lambda_3-\lambda_3^2}{f},
  \ean
  so
  \ban
 && K_{12}+K_{23}\\
 && =\frac{(\nabla_{\nabla f}Ric)(e_1, e_1)+(\nabla_{\nabla f}Ric)(e_2, e_2)+(\nabla_{\nabla f}Ric)(e_3, e_3)}{f}\\
 &&\quad+\frac{\frac{1}{2}(\lambda_1+\lambda_2+\lambda_3)-(\lambda_1^2+\lambda_2^2+\lambda_3^2)}{f}\\
 &&=\frac{\nabla f\cdot\nabla(\lambda_1+\lambda_2+\lambda_3)}{f}+\frac{\frac{1}{2}(\lambda_1+\lambda_2+\lambda_3)-(\lambda_1^2+\lambda_2^2+\lambda_3^2)}{f},
  \ean
  where we used $\lambda_2=0$ and $(\nabla_{\nabla f}Ric)(e_2, e_2)=0$.
  
It follows from the above equation, Proposition \ref{w23}, \eqref{k1a} and Theorem \ref{nabla Ric} that
\ban
&& \int_{\Sigma(s)}2(K_{12}+K_{13}+K_{23}) d\sigma_{\Sigma(s)}\\
&\leq &\int_{\Sigma(s)}\left[ (\lambda_{1}+\lambda_{3})-{\frac{1}{3}}(1+{\frac{1}{2f}})+(K_{12}+K_{23 })+{\frac{(\lambda_{1}+\lambda_{3})}{f}}\right]  d\sigma_{\Sigma(s)} \\
&&\quad +2\cdot\frac{1}{6}\int_{\Sigma(s)} (1+{\frac{1}{2f}}) d\sigma_{\Sigma(s)}
+\int_{\Sigma(s)}\frac{2|\nabla Ric|^2}{f} d\sigma_{\Sigma(s)}\\
&=&\int_{\Sigma(s)} (1+\frac{1}{f})(\lambda_1+\lambda_{2}+\lambda_{3}) d\sigma_{\Sigma(s)}+\int_{\Sigma(s)}\frac{2|\nabla Ric|^2}{f} d\sigma_{\Sigma(s)} \\
&&+\int_{\Sigma(s)} \left[ \frac{\nabla f\cdot\nabla(\lambda_1+\lambda_2+\lambda_3)}{f}+\frac{\frac{1}{2}(\lambda_1+\lambda_2+\lambda_3)-(\lambda_1^2+\lambda_2^2+\lambda_3^2)}{f}\right]  d\sigma_{\Sigma(s)}\\
&\leq &\int_{\Sigma(s)} \left[ \frac{\nabla f\cdot\nabla(\lambda_1+\lambda_2+\lambda_3)}{f}+1.1(\lambda_1+\lambda_2+\lambda_3)\right]  d\sigma_{\Sigma(s)}\\
&&\quad+\int_{\Sigma(s)}\frac{-1.9998(\lambda_1+\lambda_2+\lambda_3)+2.02(K_{12}+K_{13}+K_{23})}{f} d\sigma_{\Sigma(s)},
\ean
and then
\ban
&&\int_{\Sigma(s)}(2-\frac{2.02}{s})(K_{12}+K_{13}+K_{23}) d\sigma_{\Sigma(s)}\\
&\leq &\int_{\Sigma(s)}\left[ \frac{\nabla f\cdot\nabla(\lambda_1+\lambda_2+\lambda_3)}{f}+1.1(\lambda_1+\lambda_2+\lambda_3)\right]  d\sigma_{\Sigma(s)};
\ean
hence we have
\ban
&&\int_{\Sigma(s)}(K_{12}+K_{13}+K_{23}) d\sigma_{\Sigma(s)}\\
&\leq& \int_{\Sigma(s)}\left[ \frac{0.6\nabla f\cdot\nabla(\lambda_1+\lambda_2+\lambda_3)}{f}+0.66(\lambda_1+\lambda_2+\lambda_3)\right]  d\sigma_{\Sigma(s)}
\ean
for sufficiently large $s$.
We have completed the proof of Lemma \ref{k23}.
\end{proof}

Together with Proposition \ref{nabla Ric} and Proposition \ref{k23}, we immediately have the following Proposition.
\begin{prop}\label{ric}
	Let $(M^5, g, f)$ be a five-dimensional shrinking gradient Ricci soliton with $R=1$.  If it has bounded curvature, then
	\ban
\int_{\Sigma(s)}|\nabla Ric|^2 d\sigma_{\Sigma(s)}
&\leq&
-0.3333\int_{\Sigma(s)} (\lambda_1+\lambda_2+\lambda_3)\, d\sigma_{\Sigma(s)}\\
 &&+\frac{0.606}{s}\int_{\Sigma(s)} \langle\nabla (\lambda_1+\lambda_2+\lambda_3), \nabla f \rangle d\sigma_{\Sigma(s)}.
	\ean
for sufficiently large almost everywhere $s$.
\end{prop}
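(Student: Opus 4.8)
The plan is to integrate the pointwise estimate of Proposition \ref{nabla Ric} over a level set $\Sigma(s)$ and then insert the integral bound of Lemma \ref{k23}; the result is purely algebraic. First I would fix $s$ large enough that $\Sigma(s)$ lies entirely outside the compact set on which Proposition \ref{nabla Ric} was established, so that
\[
|\nabla Ric|^2\leq -0.9999(\lambda_1+\lambda_2+\lambda_3)+1.01(K_{12}+K_{13}+K_{23})
\]
holds at every point of $\Sigma(s)$. Integrating this inequality against the induced measure $d\sigma_{\Sigma(s)}$ gives
\[
\int_{\Sigma(s)}|\nabla Ric|^2\,d\sigma_{\Sigma(s)}\leq -0.9999\int_{\Sigma(s)}(\lambda_1+\lambda_2+\lambda_3)\,d\sigma_{\Sigma(s)}+1.01\int_{\Sigma(s)}(K_{12}+K_{13}+K_{23})\,d\sigma_{\Sigma(s)}.
\]

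Next I would substitute the upper bound supplied by Lemma \ref{k23} for the integral $\int_{\Sigma(s)}(K_{12}+K_{13}+K_{23})\,d\sigma_{\Sigma(s)}$. Since the coefficient $1.01$ is strictly positive, replacing this integral by a larger quantity preserves the direction of the inequality, and I obtain
\ban
\int_{\Sigma(s)}|\nabla Ric|^2\,d\sigma_{\Sigma(s)}
&\leq& -0.9999\int_{\Sigma(s)}(\lambda_1+\lambda_2+\lambda_3)\,d\sigma_{\Sigma(s)}\\
&&+1.01\left[\frac{0.6}{s}\int_{\Sigma(s)}\langle\nabla(\lambda_1+\lambda_2+\lambda_3),\nabla f\rangle\,d\sigma_{\Sigma(s)}+0.66\int_{\Sigma(s)}(\lambda_1+\lambda_2+\lambda_3)\,d\sigma_{\Sigma(s)}\right].
\ean
Collecting the two contributions to $\int_{\Sigma(s)}(\lambda_1+\lambda_2+\lambda_3)\,d\sigma_{\Sigma(s)}$, with coefficients $-0.9999$ and $1.01\times 0.66=0.6666$, yields the combined coefficient $-0.3333$, while the gradient term acquires the coefficient $1.01\times 0.6=0.606$. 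This is exactly the claimed inequality, valid for sufficiently large almost every $s$.

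Because both analytic ingredients are already in hand, I do not expect any genuine obstacle: the statement is a direct combination of Proposition \ref{nabla Ric} and Lemma \ref{k23}, which is why it is asserted to follow immediately. The only matters requiring care are bookkeeping ones. I must ensure $s$ is large enough that the pointwise estimate holds on all of $\Sigma(s)$, and I must carry along the \emph{almost everywhere} caveat inherited from Lemma \ref{k23}, since $\lambda_1+\lambda_2+\lambda_3$ is merely Lipschitz and $\langle\nabla(\lambda_1+\lambda_2+\lambda_3),\nabla f\rangle$ is to be read in the sense of the Remark following Lemma \ref{k23}. Finally I would note that only the \emph{sign} of the multiplier $1.01$ is used in the substitution; the nonnegativity of $K_{12}+K_{13}+K_{23}$ outside a compact set, recorded in the Remark after Proposition \ref{nabla Ric}, is not needed here.
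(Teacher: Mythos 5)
Your proposal is correct and is exactly the argument the paper intends: the paper states that the proposition follows immediately from combining Proposition \ref{nabla Ric} and Lemma \ref{k23}, and your integration of the pointwise bound followed by substitution of the level-set estimate, with the arithmetic $-0.9999+1.01\times 0.66=-0.3333$ and $1.01\times 0.6=0.606$, reproduces it precisely.
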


\vspace{0.5cm}

\begin{prop}\label{other direction}
	Let $(M^5, g, f)$ be a five-dimensional shrinking gradient Ricci soliton with $R=1$.  If it has bounded curvature, then
\ban
\int_{\Sigma(s)} \langle\nabla (\lambda_1+\lambda_2+\lambda_3), \nabla f \rangle d\sigma_{\Sigma(s)}\leq 0
\ean
	for sufficiently large almost everywhere $s$.
\end{prop}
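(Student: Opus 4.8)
The plan is to turn the level–set integral into an ordinary derivative in the level parameter $s$, and then to play the decay of $\lambda_1+\lambda_2+\lambda_3$ against the lower bound coming from Proposition \ref{ric}. Write $u:=\lambda_1+\lambda_2+\lambda_3\ge 0$ (Lemma \ref{le123}) and set $\Phi(s):=\int_{\Sigma(s)}u\,d\sigma$, where $d\sigma$ is the area element of $\Sigma(s)$. Recalling $|\nabla f|^2=f$ and $\Delta f=\tfrac n2-R=\tfrac32$, I would apply the divergence theorem to the field $u\nabla f$ on the shells $\{a\le f\le b\}$, using $\divg(u\nabla f)=\langle\nabla u,\nabla f\rangle+\tfrac32 u$, together with the co-area formula; since the outward normal of $\Sigma(s)$ is $\nabla f/|\nabla f|$ and $|\nabla f|=\sqrt s$ on $\Sigma(s)$, differentiating in $b$ yields the identity
\[
\int_{\Sigma(s)}\langle\nabla u,\nabla f\rangle\,d\sigma=s\,\Phi'(s)-\Phi(s)=s^2\frac{d}{ds}\!\left(\frac{\Phi(s)}{s}\right)
\]
for almost every $s$, where $\langle\nabla u,\nabla f\rangle$ is understood in the parallel-transport sense of the Remark after Lemma \ref{k23}. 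It therefore suffices to show that $G(s):=\Phi(s)/s$ is nonincreasing for all large $s$.

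Two facts about $G$ are available. First $G\ge 0$. Second, by Theorem \ref{uniform tend to zero} one has $u\to 0$ uniformly at infinity, so using $\mathrm{Vol}(\Sigma(s))=16\pi s|M_-|$ from Theorem \ref{levelset} we get $G(s)=16\pi|M_-|\cdot\frac{1}{\mathrm{Vol}(\Sigma(s))}\int_{\Sigma(s)}u\,d\sigma\to 0$ as $s\to\infty$. The decisive input is Proposition \ref{ric}: since $|\nabla Ric|^2\ge 0$ it gives
\[
0\le -0.3333\,\Phi(s)+\frac{0.606}{s}\bigl(s\Phi'(s)-\Phi(s)\bigr),
\]
hence $s\Phi'(s)-\Phi(s)\ge c\,s\,\Phi(s)$ with $c:=0.3333/0.606>\tfrac12$, and therefore $G'(s)=\frac{s\Phi'(s)-\Phi(s)}{s^2}\ge c\,G(s)$ for a.e. large $s$.

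From here a Gronwall comparison finishes the argument. As $G$ is absolutely continuous, $\bigl(e^{-cs}G(s)\bigr)'\ge 0$ a.e., so $e^{-cs}G(s)$ is nondecreasing; if $G(s_0)>0$ for some large $s_0$, then $G(s)\ge G(s_0)e^{c(s-s_0)}\to\infty$, contradicting $G(s)\to 0$. Hence $G\equiv 0$ for all large $s$, so $G'=0$ there and
\[
\int_{\Sigma(s)}\langle\nabla u,\nabla f\rangle\,d\sigma=s^2G'(s)=0\le 0
\]
for a.e. sufficiently large $s$, which is the assertion.

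The main obstacle is regularity: the eigenvalues $\lambda_i$, and hence $u$, $\Phi$ and $G$, are only Lipschitz, so the co-area identity, the differentiation of $\Phi$, and the Gronwall step must all be justified at almost every value $s$ rather than pointwise. I would handle this exactly as in the earlier parts of the paper: restrict to the full-measure set where the eigenvalues and a measurable orthonormal eigenframe are differentiable, interpret $\langle\nabla u,\nabla f\rangle$ via the parallel-transported frame as in the Remark, and use Sard's theorem so that almost every large $s$ is a regular value of $f$ with $\Sigma(s)$ a smooth hypersurface; the absolute continuity of $\Phi$ then legitimizes both the identity $\int_{\Sigma(s)}\langle\nabla u,\nabla f\rangle\,d\sigma=s\Phi'(s)-\Phi(s)$ and the comparison argument.
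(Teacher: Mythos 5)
Your argument is correct and follows essentially the same route as the paper: you study the quantity $G(s)=s^{-1}\int_{\Sigma(s)}(\lambda_1+\lambda_2+\lambda_3)\,d\sigma$, derive the same identity $\int_{\Sigma(s)}\langle\nabla(\lambda_1+\lambda_2+\lambda_3),\nabla f\rangle\,d\sigma=s^2G'(s)$ (via the divergence theorem and the coarea formula rather than the paper's flow of $\nabla f/|\nabla f|^2$, but the computation is the same), and combine Proposition \ref{ric} with $G\ge 0$ and $G(s)\to 0$ from Theorem \ref{uniform tend to zero}. The only difference is the finish: you convert Proposition \ref{ric} into the differential inequality $G'\ge cG$ and run a Gronwall comparison to force $G\equiv 0$, whereas the paper only extracts the sign $G'\ge 0$ a.e. and contradicts the decay of $G$ directly; both are valid and yield the same (stronger) conclusion that the integral actually vanishes for a.e. large $s$.
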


\begin{proof}
	For the purpose, we consider the following one parameter of diffeomorphisms,
	\begin{equation*}
		\begin{aligned}
			\begin{cases}
				&\frac{\partial F}{\partial s}=\frac{\nabla f}{|\nabla f|^2},\\
				&F(x, a)=x \in \Sigma(a).
			\end{cases}
		\end{aligned}
	\end{equation*}
	Then $\frac{\partial f}{\partial s}=\langle\nabla f, \frac{\nabla f}{|\nabla f|^2} \rangle=1$,
	and the advantage of $F$ is that it maps level set of $f$ to another level set, in particular $f(F(x, s))=s$.
	
	\smallskip
	Suppose $\{x_1, x_2, x_3, x_4\}$ are local coordinate chart of $\Sigma(a)$, on $\Sigma(s)$, let  $g(s)(\frac{\partial }{\partial x_i}, \frac{\partial}{\partial x_j}):=g(\frac{\partial F}{\partial x_i}, \frac{\partial F}{\partial x_j})$, $d\sigma_{\Sigma(s)}=\sqrt{det(g_{ij})}dx$, where $dx=dx_1\wedge dx_2\wedge dx_3\wedge dx_4$.
	Next we compute the derivatives of $d\sigma_{\Sigma(s)}$.
	\begin{align*}
		&\frac{\partial}{\partial s}d\sigma_{\Sigma(s)}=\frac{\partial}{\partial s}\sqrt{det(g_{ij})}dx\\
		=&\frac{1}{2}\cdot 2 g^{ij}\langle \nabla_{\frac{\partial F}{\partial x_i}}\frac{\partial F}{\partial s}, \frac{\partial F}{\partial x_j}\rangle d\sigma_{\Sigma(s)}\\
		=& g^{ij}\langle \nabla_{\frac{\partial F}{\partial x_i}}\frac{\nabla f}{|\nabla f|^2}, \frac{\partial F}{\partial x_j}\rangle d\sigma_{\Sigma(s)}\\
		=&\frac{1}{|\nabla f|^2}g^{ij}\nabla^2 f(\frac{\partial F}{\partial x_i}, \frac{\partial F}{\partial x_j})d\sigma_{\Sigma(s)}\\
		=&\frac{1}{|\nabla f|^2} g^{ij}\left(\frac{1}{2}g(\frac{\partial F}{\partial x_i}, \frac{\partial F}{\partial x_j})-Ric(\frac{\partial F}{\partial x_i}, \frac{\partial F}{\partial x_j})\right)d\sigma_{\Sigma(s)}\\
		=&\frac{1}{|\nabla f|^2} (2-1)d\sigma_{\Sigma(s)}=\frac{1}{s}d\sigma_{\Sigma(s)}.
	\end{align*}
	Hence, it is easy to check that
	\ban
	\frac{\partial}{\partial s}\left(\frac{1}{s}d\sigma_{\Sigma(s)}\right)=\left(-\frac{1}{s^2}+\frac{1}{s}\frac{1}{s}\right) d\sigma_{\Sigma(s)}=0.
	\ean
	Define a function
	\ban
	I(s)=\int_{\Sigma(s)} (\lambda_1+\lambda_2+\lambda_3)\cdot \frac{1}{s} d\sigma_{\Sigma(s)},
	\ean
it is Lipschitz continuous, hence differentiable almost everywhere,
and then we can compute the derivative of $I(s)$ as follows:
	\ban
	I'(s)&&=\frac{d}{ds}\int_{\Sigma(s)} (\lambda_1+\lambda_2+\lambda_3)\cdot \frac{1}{s} d\sigma_{\Sigma(s)}\\
	&&=\int_{\Sigma(s)} \langle \nabla (\lambda_1+\lambda_2+\lambda_3), \frac{\nabla f}{|\nabla f|^2}\rangle \frac{1}{s}d\sigma_{\Sigma(s)}\\
	&&+\int_{\Sigma(s)} (\lambda_1+\lambda_2+\lambda_3)\frac{\partial}{\partial s}\left(\frac{1}{s}d\sigma_{\Sigma(s)}\right)\\
	&&=\int_{\Sigma(s)} \langle \nabla (\lambda_1+\lambda_2+\lambda_3), \frac{\nabla f}{|\nabla f|^2}\rangle \frac{1}{s}d\sigma_{\Sigma(s)}\\
	&&=\frac{1}{s^2}\int_{\Sigma(s)} \langle \nabla (\lambda_1+\lambda_2+\lambda_3), \nabla f\rangle d\sigma_{\Sigma(s)},
	\ean
	where $|\nabla f|^2=s$ were used in the last equality.
	
	\smallskip
	Moreover, since $I(s)$ tends to zero as $s\rightarrow\infty$ by Theorem \ref{uniform tend to zero}, there exists sufficiently large $b>a$ such that $I'(b)\leq 0$, i.e.
	\ban
	\int_{\Sigma(b)} \langle \nabla(\lambda_1+\lambda_2+\lambda_3),\nabla f \rangle d\sigma_{\Sigma(b)}\leq 0.
	\ean
	
Finally, we claim that
\ban
\int_{\Sigma(s)} \langle\nabla (\lambda_1+\lambda_2+\lambda_3), \nabla f \rangle d\sigma_{\Sigma(s)}\leq 0
\ean
for almost everywhere $s$ with $s\geq b$.
In fact, if not, assume there is some $c>b$, such that
\ban
\int_{\Sigma(c)} \langle\nabla (\lambda_1+\lambda_2+\lambda_3), \nabla f \rangle d\sigma_{\Sigma(c)}> 0.
\ean
Similarly, because $I(s)$ tends to zero as $s\rightarrow\infty$ by Theorem \ref{uniform tend to zero}, there exists sufficiently large $d>c$ such that $I'(d)< 0$, i.e.,
	\ban
	\int_{\Sigma(d)} \langle\nabla (\lambda_1+\lambda_2+\lambda_3), \nabla f \rangle d\sigma_{\Sigma(d)}< 0.
	\ean
Then it follows from Corollary \ref{ric} that
	\ban
&&\int_{\Sigma(d)}|\nabla Ric|^2 d\sigma_{\Sigma(d)}\\
&\leq&
-0.3\int_{\Sigma(d)} (\lambda_1+\lambda_2+\lambda_3)\,  d\sigma_{\Sigma(d)}
+0.6\int_{\Sigma(d)} \langle\nabla (\lambda_1+\lambda_2+\lambda_3), \nabla f \rangle d\sigma_{\Sigma(d)}\\
&<&0,
\ean	
which is a contradiction.	
We have completed the proof of Proposition \ref{other direction}.
\end{proof}

\vspace{0.5cm}

\textbf{The Proof of Theorem \ref{maintheorem}.}
We can apply Propositions \ref{ric} and \ref{other direction} to derive that
\ban
\int_{\Sigma(s)}|\nabla Ric|^2 d\sigma_{\Sigma(s)}=0 \,\,\, \text{and}
\int_{\Sigma(s)} (\lambda_1+\lambda_2+\lambda_3)\, d\sigma_{\Sigma(s)}=0
\ean
for sufficiently large almost everywhere $s$ due to the nonnegativity of $\lambda_1+\lambda_2+\lambda_3$. Thus  $\nabla Ric =0$ and $\lambda_1+\lambda_2+\lambda_3=0$  on $M \setminus D(s)$ by the continuity of $\nabla Ric$ and $\lambda_1+\lambda_2+\lambda_3$.
Together with \eqref{123}, $\lambda_1+\lambda_2+\lambda_3=0$ implies that
 \ban
 \lambda_1=\lambda_2=\lambda_3\equiv 0 \,\,\, \text{and}\,\,\, \lambda_4=\lambda_5\equiv \frac{1}{2}.
 \ean
Due to the analyticity of gradient Ricci soliton,  $\nabla Ric=0$ on $M$.
 Finally, De Rham's splitting theorem implies that $(M^5, g, f)$ is isometric to a finite quotient of  $\mathbb{R}^3\times {N}^2$, where ${N}^2$ is a two-dimensional Einstein manifold with Einstein constant $\frac{1}{2}$, has to be isometric to $S^2$.
We have completed the proof of Theorem \ref{maintheorem}.
\qed

\vspace{0.5cm}
{\bf Acknowledgments}.
The authors would like to thank Professor Huai-dong Cao, Professor Mijia Lai,  Professor Yu Li,    Professor Xi-Nan Ma,  Professor Yongjia Zhang for their helpful discussions. Wu is grateful to Professor Hong Huang and Professor Xiaobo Zhuang for their careful explanation on knowledge from Topology.


\begin{thebibliography}{10}
\bibitem{Brendle1}S. Brendle, {\em  Rotational symmetry of self-similar solutions to the Ricci flow}, Invent. Math. 194 (2013), no. 3, 731-764.
\bibitem{Brendle2}S. Brendle, {\em Rotational symmetry of Ricci solitons in higher dimensions}, J. Differential Geom. 97 (2014), no. 2, 191-214.
\bibitem{Cao}H. D. Cao, {\em Existence of gradient K\"ahler-Ricci solitons}, Elliptic and parabolic methods in geometry (Minneapolis, MN, 1994), 1-16.
\bibitem{Cao-Chen}H. D. Cao, Q. Chen, {\em On locally conformally flat gradient steady Ricci solitons}, Trans. Amer. Math. Soc. 364 (2012), no. 5, 2377-2391.
\bibitem{Cao-Chen2}H. D. Cao, Q. Chen,     {\em On Bach-flat gradient shrinking Ricci solitons}, Duke Math. J. 162 (2013), no. 6, 1149-1169.

\bibitem{Cao-Chen-Zhu}H. D. Cao, B. L. Chen, X. P. Zhu, {\em Recent developments on Hamilton's Ricci flow}, Surveys in differential geometry. Vol. XII. Geometric flows, 47-112.
\bibitem{Cao-Zhou}H. D. Cao, D. T. Zhou, {\em On complete gradient shrinking Ricci solitons}, J. Differential Geom. 85 (2010), no. 2, 175-185.
\bibitem{Cao-Wang-Zhang}X. D. Cao, B. Wang, Z. Zhang, {\em On locally conformally flat gradient shrinking Ricci solitons}, Commun. Contemp. Math. 13 (2011), no. 2, 269-282.

\bibitem{Chen-Wang}X. X. Chen, Y. Q. Wang, {\em On four-dimensional anti-self-dual gradient Ricci solitons}, J. Geom. Anal. 25 (2015), no. 2, 1335-1343.
    	
		\bibitem{FR16}M. Fern\'{a}ndez-L\'{o}pez, E. Garc\'{\i}a-R\'{\i}o, {\em On gradient Ricci solitons with constant scalar curvature}, Proc. Amer. Math. Soc. 144 (2016), no. 1,  369-378.
\bibitem{Cheng-Zhou}X. Cheng, D. T. Zhou, {\em Rigidity of Four-dimensional gradient shrinking Ricci solitons}, J. Reine Angew. Math., 802 (2023), no. 2, 255-274.
    \bibitem{Hamilton}R. S. Hamilton, {\em Three-manifolds with positive Ricci curvature}, J. Differential Geom.  17  (1982), no. 2, 255-306.
\bibitem{Kotschwar}B. Kotschwar, {\em On rotationally invariant shrinking Ricci solitons}, Pacific J. Math. 236 (2008), no. 1, 73-88.
\bibitem{Li-Ou-Qu-Wu}F. J. Li, J. Y. Ou, Y. Y. Qu, G. Q. Wu, {\em Rigidity of Five-dimensional Shrinking Gradient Ricci Solitons with Constant Scalar Curvature}, arXiv:2411.10712.
    \bibitem{Li-Wang1}Y. Li, B. Wang, {\em  Heat kernel on Ricci shrinkers}, Calc. Var. Partial Differential Equations 59 (2020), no. 6, Paper No. 194, 84 pp.
\bibitem{Eminenti-LaNave-Mantegazza}M. Eminenti, G. LaNave, C. Mantegazza,  {\em Ricci solitons: the equation point of view}, Manuscripta Math. 127 (2008), no. 3, 345-367.
\bibitem{Munteanu-Wang7}O. Munteanu, J. P. Wang, {\em  Smooth metric measure spaces with non-negative curvature}, Comm. Anal. Geom. 19 (2011), no. 3, 451-486.
\bibitem{Munteanu-Wang6}O. Munteanu, J. P. Wang, {\em Analysis of weighted Laplacian and applications to Ricci solitons}, Comm. Anal. Geom. 20 (2012), no. 1, 55-94.
\bibitem{Munteanu-Wang1}O. Munteanu, J. P. Wang, {\em Geometry of manifolds with densities}, Adv. Math. 259 (2014), 269-305.
\bibitem{Munteanu-Wang2}O. Munteanu, J. P. Wang, {\em Topology of K\"ahler Ricci solitons}, J. Differential Geom. 100 (2015), no. 1, 109-128.
\bibitem{Munteanu-Wang3}O. Munteanu, J. P. Wang, {\em Geometry of shrinking Ricci solitons}, Compos. Math. 151 (2015), no. 12, 2273-2300.
\bibitem{Munteanu-Wang4}O. Munteanu, J. P. Wang, {\em Positively curved shrinking Ricci solitons are compact}, J. Differential Geom. 106 (2017), no. 3, 499-505.
\bibitem{Munteanu-Wang5}O. Munteanu, J. P. Wang, {\em Conical structure for shrinking Ricci solitons}, J. Eur. Math. Soc. (JEMS) 19 (2017), no. 11, 3377-3390.
\bibitem{Naber}A. Naber, {\em  Noncompact shrinking four solitons with nonnegative curvature}, J. Reine Angew. Math. 645 (2010), 125-153.
\bibitem{Ni-Wallach}L. Ni, N. Wallach, {\em On a classification of gradient shrinking solitons}, Math. Res. Lett. 15 (2008), no. 5, 941-955.
		\bibitem{Ou-Qu-Wu}J. Y. Ou, Y. Y. Qu, G. Q. Wu, {\em Some rigidity results on shrinking gradient Ricci soliton}, arXiv:2411.06395.


\bibitem{Perelman2}G. Perelman,  {\em Ricci flow with surgery on three-manifolds}, arXiv: math/0303109.


\bibitem{Petersen-Wylie2}P. Petersen, W. Wylie, {\em Rigidity of gradient Ricci solitons}, Pacific J. Math. 241 (2009), no. 2, 329-345.
\bibitem{Pigola-Rimoldi-Setti}S. Pigola, M. Rimoldi, A. Setti,   {\em Remarks on non-compact gradient Ricci solitons}, Math. Z. 268 (2011), no. 3-4, 777-790.
\bibitem{Wu-Wu}G. Q. Wu, J. Y. Wu, {\em Four dimensional shrinkers with nonnegative Ricci curvature}, arXiv:2505.02315.


		\bibitem{Wu-Zhang}G. Q. Wu, S. J. Zhang, {\em Remarks on shrinking gradient K\"ahler-Ricci solitons with positive bisectional curvature}, C. R. Math. Acad. Sci. Paris 354 (2016), no. 7, 713-716.
\bibitem{Wu-Wu-Wylie}P. Wu, J. Y. Wu, W. Wylie, {\em Gradient shrinking Ricci solitons of half harmonic Weyl curvature}, Calc. Var. Partial Differential Equations 57 (2018), no. 5, Art. 141, 15 pp.
\bibitem{Zhang}Z. H. Zhang, {\em Gradient shrinking solitons with vanishing Weyl tensor}, Pacific J. Math. 242 (2009), no. 1, 189-200.
\end{thebibliography}
\end{document}